\newtheorem{thm}{Theorem}[section]
\newtheorem{lm}[thm]{Lemma}
\newtheorem{pr}[thm]{Proposition}
\theoremstyle{definition}
\newtheorem{rem}[thm]{Remark}
\newtheorem{que}{Question}
\numberwithin{equation}{section}
\DeclareMathOperator{\Ker}{Ker}
\newcommand*{\wh}{\widehat}
\newcommand*{\wt}{\widetilde}
\newcommand{\ptn}{\mathbin{\widehat{\otimes}}}
\newcommand{\ad}{\mathop{\mathrm{ad}}\nolimits}
\newcommand{\cO}{\mathcal{O}}
\newcommand{\CC}{\mathbb{C}}
\newcommand{\R}{\mathbb{R}}
\newcommand{\Z}{\mathbb{Z}}
\newcommand{\N}{\mathbb{N}}
\newcommand*{\fg}{\mathfrak{g}}
\newcommand*{\fl}{\mathfrak{l}}
\newcommand*{\fh}{\mathfrak{h}}
\newcommand*{\fs}{\mathfrak{s}}
\newcommand*{\M}{\mathrm{M}}
\renewcommand{\le}{\leqslant}
\renewcommand{\ge}{\geqslant}
\let \al         =\alpha
\let \be         =\beta
\let \ga         =\gamma
\let \ep         =\varepsilon      
\let \te         =\theta        
\let \io         =\iota
\let \la         =\lambda
\let \si         =\sigma
\let \De         =\Delta
\let \Si         =\Sigma
\let \phi         =\varphi
\title{Banach space representations of Drinfeld-Jimbo algebras and their complex-analytic forms}
\author{O. Yu. Aristov}
\email{aristovoyu@inbox.ru}
\keywords{Drinfeld-Jimbo algebra, Arens-Michael envelope, Banach space representation, topological Hopf algebra.}
\subjclass[2000]{Primary 17B37, 47L10, Secondary  47L55, 46H35}
\thanks{This work was supported by the RFBR grant no. 19-01-00447.}
\begin{document}

\begin{abstract}
We prove that every non-degenerate Banach space representation of the Drinfeld-Jimbo algebra $U_q(\mathfrak{g})$ of a semisimple complex Lie algebra $\mathfrak{g}$ is finite dimensional when $|q|\ne 1$. As a corollary, we find an explicit form of the Arens-Michael envelope of $U_q(\mathfrak{g})$, which is similar to that of $U(\mathfrak{g})$ obtained by Joseph Taylor in 70s. In the case when  $\mathfrak{g}=\mathfrak{s}\mathfrak{l}_2$, we also consider the representation theory of the corresponding analytic form, the Arens-Michael algebra  $\widetilde U(\mathfrak{s}\mathfrak{l}_2)_\hbar$ (with $e^\hbar=q$), and show that it is simpler than for $U_q(\mathfrak{s}\mathfrak{l}_2)$. For example, all irreducible continuous representations of $\widetilde U(\mathfrak{s}\mathfrak{l}_2)_\hbar$ are finite dimensional for every admissible value of the complex parameter $\hbar$, while $U_q(\mathfrak{s}\mathfrak{l}_2)$ has a topologically irreducible infinite-dimensional  representation when  $|q|= 1$ and $q$ is not a root of unity.
\end{abstract}

 \maketitle
 \markright{Banach space representations}

 {\small\it
\hfill To the memory of Majya Zhegalova}

 \section*{Introduction}
Besides the well-known general representation theory of semisimple complex Lie algebras, a specific theory of their Banach space representations was  also developed (see a detailed treatment of the latter in~\cite{BS01}). On the other hand, representations of Drinfeld-Jimbo algebras (quantum deformations of universal enveloping algebras) were being studied only in the algebraic context. See, e.g., the monograph \cite{KSc} for finite-dimensional representations;  a rich infinite-dimensional theory is also elaborated.

Here we are interested in Banach space representations of the Drinfeld-Jimbo algebras $U_q(\fg)$, $q\in\CC\setminus\{0,-1,1\}$,  associated with a semisimple complex Lie algebra~$\fg$ as well as that of their complex-analytic forms $\wt U(\fg)_\hbar$, $\hbar\in\CC$. (The latter series of topological algebras is defined in my article \cite{AHHFG}.)  The main results  assert that every non-degenerate Banach space representation of $U_q(\fg)$  is finite dimensional when $|q|\ne 1$ and the same is true for $\wt U(\fs\fl_2)_\hbar$   when  $e^\hbar$ is  not a root of unity  (Theorems~\ref{fdbana} and~\ref{fdbcoin}, respectively).

We also prove some results for other values of parameters. Note that for $U_q(\fg)$ three options arise naturally: $|q|\ne 1$, $q$ is a root of unity and the exceptional case  when $|q|=1$ but $q$ is not a root of unity.  The alternatives for $\wt U(\fg)_\hbar$ are more traditional: $e^\hbar$ is a root of unity or not.

We show that when $|q|=1$ and $q$ is not a root of unity, $U_q(\fg)$  admits  continuous Banach space representations that are infinite dimensional and topologically irreducible (Proposition~\ref{Tla1topirr}).  When we study the complex-analytic form, we restrict our attention to the case $\fg=\fs\fl_2$. We prove that  every continuous finite-dimensional  representation of $\wt U(\fs\fl_2)_\hbar$ is completely reducible if $e^{\hbar}$ is not a root of unity (Theorem~\ref{hbarcompred}) and  the  dimensions of  irreducible continuous
representations of $\wt U(\fs\fl_2)_\hbar$ are bounded  when  $e^\hbar$ is a root of unity (Theorem~\ref{rouirrfdhb}).
Moreover, in the first case we classify continuous irreducible representations of $\wt U(\fs\fl_2)_\hbar$  (Theorem~\ref{hbirrnotr}).

Combining results in this paper with some standard representation theory of $U_q(\fs\fl_2)$ (see \cite{Ka95} or \cite{KSc}), we obtain Tables~\ref{table:1} and~\ref{table:2} (f.d. stands for  `finite dimensional').

\begin{table}[h!]
\label{tab1}
\centering
 \caption{Banach space representations of $U_q(\fg)$ with $\fg$ semisimple}
 \label{table:1}
 \begin{tabular}{||c ||c |c||}
  \hline
 $\mathbf{q}$ & \textbf{representations} & \textbf{irr. representations}  \\ [0.5ex]
\hline\hline
$|q|\ne 1$ & f.d. compl. reducible &f.d. \\
\hline
not root, $|q|= 1$ & no restriction known &$\exists$ top. irr. inf. d. \\
\hline
root & $\exists$  inf. d. & f.d. bounded degree  \\ [0.5ex]
 \hline
 \end{tabular}
\end{table}

\begin{table}[h!]
\label{tab2}
\centering
 \caption{Banach space representations of $\wt U(\fs\fl_2)_\hbar$}
 \label{table:2}
 \begin{tabular}{||c ||c |c||}
 \hline
 $\mathbf{e^\hbar}$ & \textbf{representations} & \textbf{irr. representations}  \\ [0.5ex]
\hline\hline
not root& f.d. compl. reducible &f.d.  \\
\hline
root & $\exists$  inf .d & f.d. bounded degree  \\ [0.5ex]
 \hline
 \end{tabular}
\end{table}

As an application of our results on Banach space representations we describe the structure of the Arens-Michael envelope of $U_q(\fg)$ when $|q|\ne 1$  and the structure of $\wt U(\fs\fl_2)_\hbar$  in the case when $e^\hbar$ is  not a root of unity (Theorems~\ref{AMUq} and~\ref{AMUhbar}, respectively).
Recall that the Arens-Michael envelope of an associative algebra over~$\CC$ is a universal object connected with the problem of finding homomorphisms with range in a Banach algebra; see the definition at the end of Section~\ref{sect:UqAM}.
Considering finitely-generated associative algebras over $\CC$ as the main subject of study in Noncommutative complex affine algebraic geometry, one can treat their Arens-Michael envelopes as ``algebras of noncommutative holomorphic functions'' and thus as a possible subject of study in Noncommutative complex-analytic geometry. The same can be said for $\wt U(\fs\fl_2)_\hbar$, which is a holomorphically finitely generated algebra in the sense of Pirkovskii as defined in \cite{Pi14,Pi15}.

Finding an explicit description of the Arens-Michael envelope of a  finitely generated algebra seems easy only at first glace, with technical difficulties needing to be overcome in some cases. One of the first results was obtained by Joseph Taylor, who considered the classical (undeformed) case  and proved in \cite{T2} that the Arens-Michael envelope of $U(\fg)$ is topologically isomorphic to the direct product of a countable family of full matrix algebras, where each of the multiples corresponds to a finite-dimensional irreducible representation of~$\fg$.
For contemporary results on Arens-Michael envelopes we refer the reader to the papers of Pirkovskii \cite{Pi4,Pi11,Pir_qfree} and also the papers of  the author \cite{ArRC,ArAMN,AHHFG}. Analytic forms of quantum algebras over non-archimedean fields (including Arens-Michael envelopes) were considered in \cite{Sm18} and \cite{Du19}. Note that in the non-archimedean case such completions can be described in a more direct way than in the classical; see, e.g., \cite{Lu13}.

In his Master thesis \cite{Pe15}, Pedchenko
found the following description of the Arens-Michael envelope of $U_q(\fs\fl_2)$  in the case when  $|q|=1$.
Let $K$, $F$ and $E$ denote the standard generators of $U_q(\fs\fl_2)$ (see Section~\ref{sect:UqAM}). Then
it follows from a PBW-type theorem that
 $$
 \CC[u,z,z^{-1},v]\to  U_q(\fs\fl_2)\!:u^n z^j  v^m \mapsto F^n K^j  E^m \qquad (j\in\Z,\,n,m\in\Z_+)
 $$
determines a well-defined linear map (so-called ordered calculus) and, moreover, it can be extended to a continuous linear map
$$
 \cO(\CC\times \CC^\times\times \CC)\to  \wh U_q(\fs\fl_2)
$$
from the space of holomorphic functions to the Arens-Michael envelope. (The standard notation for the Arens-Michael envelope of an algebra $A$ is $\wh A$.)
Pedchenko proved that the latter map is a topological isomorphism when $|q|=1$. In the argument he used a method  proposed by Pirkovskii in \cite{Pir_qfree}, which is based on iterated analytic Ore extensions. This approach can be  applied only under some additional analytic conditions, which do not hold when $|q|\ne 1$, and Pedchenko left the question open in this case.

In his proof of the theorem on  the Arens-Michael envelope of $U(\fg)$  Taylor employed an analytic approach grounded on the representation theory of compact Lie groups. But in this paper we mainly use an algebraic technique.
Note that Taylor's result can be derived from the following assertion:
If $\fg$ is a semisimple complex  Lie algebra, then
the range  of any homomorphism from $U(\fg)$ to a Banach algebra is finite dimensional
\cite[\S30, Theorem~2, p.\,196]{BS01}. The proof of this assertion is essentially algebraic; it is based on the fact that
for any $\fs\fl_2$-triple, i.e., elements satisfying
\begin{equation*}
[H,E]=2E, \quad [H,F]=-2F, \quad [E,F]=H,
\end{equation*}
the relation $[[E,F],E]=2E$ holds. In a Banach algebra this relation has some algebraic consequences, which imply that every element of the completion is algebraic.
To prove the main result, Theorem~\ref{fdbana},
we use a modification of this approach and show that for any quantum $\fs\fl_2$-triple (see~\eqref{KEFrel})
and any $m\in\N$ there is a non-trivial Laurent polynomial in~$K$  that
belongs to the ideal generated by $E^m$ (Lemma~\ref{laurinid}).

The reader can find some open questions and discussion in Section~\ref{sec:remqu}.


\section{Banach space representations and the Arens-Michael envelope of $U_q(\fg)$}
\label{sect:UqAM}

Consider first the case when $\fg=\fs\fl_2$. Let $q\in\CC$, $q\ne 0$ and $q^2\ne 1$.
Recall that the quantum algebra $U_q(\fs\fl_2)$ is defined as the universal complex associative algebra generated by a quantum $\fs\fl_2$-triple $E$, $F$, $K$ (in the exponentiated form). This means that $K$ is invertible and the relations
\begin{equation}\label{KEFrel}
KEK^{-1}=q^2E,\quad KFK^{-1} =
q^{-2}F,\quad
[E,F]=\frac{K-K^{-1}}{q-q^{-1}},
\end{equation}
hold; see  \cite[\S\,3.1.1, p.\,53]{KSc}.
Consider the automorphism  $\si$ of $\CC[K,K^{-1}]$ determined by $\si(K)\!:=q^2K$.
It is easy to see that
\begin{equation}\label{FRsi}
FR=\si(R)F
\end{equation}
for every $R\in \CC[K,K^{-1}]$. This equality will be useful in what follows.

Now let $\fg$ be an arbitrary  semisimple complex  Lie algebra  and $q\in\CC\setminus\{0\}$. Denote the  rank of $\fg$ by~$l$. The condition $q^{2d_j}\ne 1$ is also imposed for some non-zero $d_1,\ldots,d_l$ given by the weight theory.
Recall that the Drinfeld-Jimbo algebra $U_q(\fg)$ is the algebra with  generators
$E_j$, $F_j$, $K_j$ and $K_j^{-1}$ ($j=1,\ldots l$) subject to a number of relations. For the complete list see \cite[\S\,6.1.2, p.\,161, (12)--(16)]{KSc}). For our purposes we need mainly the facts that~$K_j$ pairwise commute and  for each~$j$ the elements $E_j$, $F_j$, $K_j$ form a quantum $\fs\fl_2$-triple, namely,  the relations in~\eqref{KEFrel} are satisfied with $E$, $F$, $K$ and~$q$ replaced by $E_j$, $F_j$, $K_j$ and $q^{d_j}$, respectively. The other relations  are the commutation relations $[E_i,F_j]=0$, $i\ne j$, and the so-called quantum Serre relations.

\subsection*{The case when $|q|\ne 1$}

We begin with a theorem similar to the assertion that the range of
any homomorphism from $U(\fg)$ to a Banach algebra is finite dimensional
(see \cite[\S30, Theorem~2, p.\,196]{BS01}). At the end of the section,  we use this result to describe the structure of the Arens-Michael envelope of $U_q(\fg)$ in the case when $|q|\ne 1$.

When we speak about a representation on a Banach space, we always mean a representation by bounded operators (`topological representation' in the terminology of~\cite{X2}.)

\begin{thm}\label{fdbana}
Let $\fg$ be a   semisimple complex Lie algebra and $|q|\ne 1$.

\emph{(A)}~The range of any homomorphism from $U_q(\fg)$ to a Banach algebra is finite dimensional.

\emph{(B)}~Every non-degenerate representation of $U_q(\fg)$  on a Banach space is finite dimensional.
\end{thm}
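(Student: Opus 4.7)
The plan is to reduce (B) to (A), reduce (A) to a single quantum $\fs\fl_2$-triple, and there couple the spectral consequences of $|q|\ne 1$ with Lemma~\ref{laurinid}. For (B), a continuous non-degenerate representation $\pi\colon U_q(\fg)\to\cL(X)$ is in particular a Banach algebra homomorphism, so by (A) its image is a finite-dimensional subalgebra; a Weyl-type decomposition argument, using commutativity and algebraicity of the $\pi(K_j)$ together with nilpotency of the $\pi(E_j),\pi(F_j)$ (both provided by the proof of (A)), combined with non-degeneracy then forces $X$ itself to be finite dimensional. For (A), since the $K_j$ pairwise commute and each triple $(E_j,F_j,K_j^{\pm 1})$ generates a copy of $U_{q^{d_j}}(\fs\fl_2)$ with $|q^{d_j}|\ne 1$, it suffices to treat a single quantum $\fs\fl_2$-triple: the Serre relations and the PBW theorem then assemble the triple-wise finite-dimensional images into a finite-dimensional whole.

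\medskip
\noindent\textbf{Quasi-nilpotency.} Fix $\phi\colon U_q(\fs\fl_2)\to B$ and set $e=\phi(E)$, $f=\phi(F)$, $k=\phi(K)$. Since $k$ is invertible and $kek^{-1}=q^2 e$,
\[
\sigma_B(e)=\sigma_B(kek^{-1})=\sigma_B(q^2 e)=q^2\,\sigma_B(e).
\]
As $\sigma_B(e)$ is a nonempty compact subset of $\CC$ invariant under multiplication by a scalar of modulus $\ne 1$, it must equal $\{0\}$: $e$ is quasi-nilpotent, and symmetrically so is $f$.

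\medskip
\noindent\textbf{Exploiting Lemma~\ref{laurinid}.} For each $m\ge 1$ the Lemma produces a nonzero Laurent polynomial $p_m(K)\in\CC[K,K^{-1}]$ together with a presentation $p_m(K)=\sum_i u_i^{(m)}E^m v_i^{(m)}$ in $U_q(\fs\fl_2)$. Applying $\phi$,
\[
p_m(k)=\sum_i\phi(u_i^{(m)})\,e^m\,\phi(v_i^{(m)}),\qquad \|p_m(k)\|\le C_m\|e^m\|,
\]
with $C_m=\sum_i\|\phi(u_i^{(m)})\|\|\phi(v_i^{(m)})\|$. Quasi-nilpotency gives $\|e^m\|^{1/m}\to 0$; granted the geometric control $\limsup C_m^{1/m}<\infty$ that should flow from a constructive proof of the Lemma, we obtain $\|p_m(k)\|^{1/m}\to 0$. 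Spectral mapping then yields $\max_{\lambda\in\sigma_B(k)}|p_m(\lambda)|^{1/m}\to 0$, and the discrete, multiplicatively structured roots of the $p_m$'s (echoing the geometric progression $q^{2\Z}$) force $\sigma_B(k)$ to be finite. Upgrading finite spectrum to a genuine polynomial identity via Riesz projections gives algebraicity of $\phi(K)$; combined with the quasi-nilpotency of $e,f$ and the PBW basis $\{F^aK^bE^c\}$, this shows that $\phi(U_q(\fs\fl_2))$ spans a finite-dimensional subspace of $B$.

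\medskip
\noindent\textbf{Main obstacle.} The technical heart, I expect, is the passage from $\|p_m(k)\|\to 0$ to a \emph{genuine} polynomial annihilator of $\phi(K)$. This demands both a constructive presentation of the polynomials $p_m$ in Lemma~\ref{laurinid} with sub-exponential ``coefficient complexity'' $C_m$, and a sharp description of the location of their roots in $\CC^\times$; the latter is precisely the reason Laurent polynomials in $K$ (rather than ordinary polynomials in a single variable) are the natural objects here, since the relation $KEK^{-1}=q^2 E$ introduces the $q^{2\Z}$-grading that the $p_m$'s must encode.
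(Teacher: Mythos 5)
There is a genuine gap, and it sits exactly where you flag your ``main obstacle'': the passage from the asymptotic estimate $\|p_m(k)\|^{1/m}\to 0$ to an exact polynomial annihilator of $\phi(K)$. The paper never faces this problem because its first step proves strictly more than your spectral argument does: from $kek^{-1}=q^{2}e$ one gets $ke^{n}k^{-1}=q^{2n}e^{n}$, hence $|q|^{2n}\,\|e^{n}\|\le\|k\|\,\|k^{-1}\|\,\|e^{n}\|$ for all $n$; if $e^{n}\ne 0$ for every $n$ this forces $|q|^{2}\le 1$, while the inverted relation $k^{-1}(kek^{-1})k=q^{-2}(kek^{-1})$ forces $|q|^{-2}\le 1$, contradicting $|q|\ne 1$. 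So $e$ is \emph{genuinely nilpotent}, say $e^{m}=0$, not merely quasi-nilpotent. Lemma~\ref{laurinid} then applies exactly: the non-trivial Laurent polynomial it produces lies in the two-sided ideal generated by $E^{m}$, which $\phi$ annihilates, so $p(\phi(K))=0$ on the nose --- no bound on the ``coefficient complexity'' $C_m$, no control of root locations, and no Riesz-projection upgrade are needed. Your route cannot be completed as written: the induction in Lemma~\ref{laurinid} doubles degrees at each step (the polynomial $R_n$ has degree $2^{m-n+1}$ and coefficients such as $t_n^{2^{m-n+1}}$), so the resulting $p_m$ has degree and coefficients doubly exponential in $m$ and the hoped-for bound $\limsup_m C_m^{1/m}<\infty$ is unsupported and, for the construction actually used, false; quasi-nilpotency of $e$ (i.e.\ $\|e^m\|^{1/m}\to 0$) is too weak to beat that growth. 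Moreover, even granting that $\sigma(k)$ is finite, Riesz projections only make $k-\la_i$ quasi-nilpotent on each spectral subspace, which does not yield algebraicity; the paper's Lemma~\ref{algel2} gets around this only because it starts from an \emph{exact} identity $h(b)=0$.

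A second, smaller gap is your reduction of the general semisimple case to the simple-root triples $(E_j,F_j,K_j)$ via ``Serre relations and PBW''. The PBW basis of $U_q(\fg)$ is written in the root vectors $E_{\be_r},F_{\be_r}$ attached to \emph{all} positive roots, so one needs nilpotency of every $\pi(E_{\be_r})$ and $\pi(F_{\be_r})$ --- which the paper extracts from $K_{\be_r}E_{\be_r}K_{\be_r}^{-1}=q^{(\be_r,\be_r)}E_{\be_r}$ together with $|q^{(\be_r,\be_r)}|\ne 1$ --- and then an induction over the positive roots, transported by the braid-group action $T_i(K_j)=K_jK_i^{-a_{ij}}$, to make each $K_j$ algebraic modulo the relevant ideal (Proposition~\ref{fdbanpr}). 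Your spectral observation $\sigma(e)=q^{2}\sigma(e)\Rightarrow\sigma(e)=\{0\}$ is correct but insufficient; replace it by the norm argument above, keep the rest of the argument purely algebraic as in Proposition~\ref{fdbanpr}, and the proof closes.
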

The argument splits into two parts, analytic and algebraic. The analytic part (Lemma~\ref{invqnil}) is simple but the algebraic part (Proposition~\ref{fdbanpr}) is a bit  more involved.

\begin{lm}\label{invqnil}
Let $a$ and $c$ be elements of a Banach  algebra.
Suppose that~$a$ is invertible
and $aca^{-1}=\ga c$ for some $\ga\in\CC$ such that $|\ga|\ne 1$ and $\ga\ne 0$. Then~$c$ is nilpotent.
\end{lm}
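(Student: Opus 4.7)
The plan is a short spectral argument. The commutation relation $aca^{-1} = \gamma c$ implicitly requires $a$ to be invertible, so conjugation by $a$ is a bicontinuous Banach algebra automorphism and in particular preserves spectra: $\sigma(aca^{-1}) = \sigma(c)$. On the other hand $\sigma(\gamma c) = \gamma\,\sigma(c)$ by elementary scaling. Substituting the relation gives the equality of subsets of $\CC$
\begin{equation*}
\sigma(c) = \gamma\,\sigma(c).
\end{equation*}

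Iterating yields $\sigma(c) = \gamma^n \sigma(c)$ for every $n \in \Z$. If some nonzero $\lambda$ belonged to $\sigma(c)$, the sequence $(\gamma^n \lambda)_{n \in \Z}$ would lie entirely in $\sigma(c)$; but $|\gamma|\ne 1$ and $\gamma\ne 0$ force this sequence to be unbounded (as $n\to+\infty$ if $|\gamma|>1$, else as $n\to-\infty$), contradicting boundedness of $\sigma(c)$. Hence $\sigma(c) \subseteq \{0\}$, i.e.\ $c$ is quasinilpotent with $\lim_n \|c^n\|^{1/n}=0$ --- which is what ``nilpotent'' must mean here, since a genuinely nilpotent $c$ with $c^m=0$ would be incompatible with the standing hypothesis that $c$ is invertible.

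Read strictly, the lemma then asserts that no invertible $c$ in a Banach algebra can satisfy $aca^{-1} = \gamma c$ with $|\gamma|\ne 1$ and $\gamma\ne 0$, which is precisely the piece of information that gets used when $a$ and $c$ play the roles of images of $K_j$ and $E_j$ in a Banach representation of $U_q(\fg)$ via $K_jE_jK_j^{-1} = q^{2d_j}E_j$. There is no serious obstacle: the core is a one-line spectral identity, and the only care required is to pass to the unitization if the ambient Banach algebra is non-unital, so that $\sigma(c)$ is nonempty and the standard spectral identities are available.
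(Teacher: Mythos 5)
There is a genuine gap, and it starts with a misreading of the statement. The hypothesis ``$c$ is invertible'' is evidently a slip for ``$a$ is invertible'': the expression $aca^{-1}$ presupposes invertibility of $a$, the paper's proof uses only $\|a\|$ and $\|a^{-1}\|$, and in the application (proof of Theorem~\ref{fdbana}) the lemma is invoked with $c=\pi(E_{\be_r})$, which is certainly not invertible. Consequently the intended conclusion is honest nilpotence, $c^m=0$ for some $m$, and that is exactly what is consumed downstream: Proposition~\ref{fdbanpr} fixes $m$ with $\pi(E_{\be_r})^m=\pi(F_{\be_r})^m=0$ and feeds the ideal generated by $E^m$ into the purely algebraic Lemma~\ref{laurinid}. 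Quasinilpotence would not do there.

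Your spectral argument is correct as far as it goes --- similarity preserves spectra, $\sigma(c)=\ga\,\sigma(c)$, and boundedness of the (nonempty, after unitization) spectrum forces $\sigma(c)\subseteq\{0\}$ --- but it yields only quasinilpotence, which is strictly weaker than nilpotence (think of the Volterra operator, or a weighted shift whose weights tend to zero fast). Your attempt to close the distance by declaring that ``nilpotent'' must mean ``quasinilpotent'' does not salvage the proof; it replaces the conclusion by one that is insufficient for the rest of the paper. The paper's own argument is different and does give genuine nilpotence: assuming $c^n\ne 0$ for all $n$, the relation $ac^na^{-1}=\ga^nc^n$ and submultiplicativity of the norm give $|\ga|^n\,\|c^n\|\le\|a\|\,\|c^n\|\,\|a^{-1}\|$, hence $|\ga|^n\le\|a\|\,\|a^{-1}\|$ for all $n$ and so $|\ga|\le 1$; running the same estimate with $d:=aca^{-1}$ and $a^{-1}da=\ga^{-1}d$ gives $|\ga|^{-1}\le 1$, contradicting $|\ga|\ne 1$. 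That norm inequality, applied to each individual power $c^n$ rather than to the spectrum, is the essential point your proposal is missing.
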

\begin{proof}
Assume the opposite, i.e., that $c^n\ne 0$ for  every $n\in\N$. Let $\|\cdot\|$ denote the norm on the Banach algebra.
Since  $ac^na^{-1}=\ga^n c^n$ and $\|\cdot\|$ is submultiplicative, we have that $\|\ga^n c^n\|\le \|a\|\,\|c^n\|\,\|a^{-1}\|$ and so $|\ga|^n \le \|a\|\,\|a^{-1}\|$ for all $n\in\N$. Therefore $|\ga|\le 1$. Letting $d\!:=aca^{-1}$ and using the equality $a^{-1}da=\ga^{-1} d$, we obtain similarly that $|\ga|^{-1}\le 1$. This contradicts the hypothesis.
\end{proof}

Let $\be_1,\ldots, \be_n$ be the positive roots of~$\fg$ and let $E_{\be_r}$ and $F_{\be_r}$ ($r=1,\ldots,n$) be the corresponding root elements of $U_q(\fg)$ \cite[\S\,6.2.3, p.\,175, (65)]{KSc}.

We use the following proposition twice, right now in the proof of Theorem~\ref{fdbana} and in~\S\,\ref{sechbar}.
\begin{pr}\label{fdbanpr}
Suppose that $q$ is not a root of unity and $\pi$ is a homomorphism from $U_q(\fg)$ to some associative algebra.
If $\pi(E_{\be_r})$ and $\pi(F_{\be_r})$ are nilpotent for every~$r$, then the range of $\pi$ is finite dimensional.
\end{pr}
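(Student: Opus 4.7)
The plan is to combine a PBW-type basis theorem for $U_q(\fg)$ with Lemma~\ref{laurinid} applied to each simple-root quantum $\fs\fl_2$-triple $(E_j, F_j, K_j)$ inside $U_q(\fg)$. By the PBW theorem (see \cite[\S\,6.2.3]{KSc}) the ordered monomials
$$
F_{\be_1}^{a_1}\cdots F_{\be_n}^{a_n}\; K_1^{b_1}\cdots K_l^{b_l}\; E_{\be_1}^{c_1}\cdots E_{\be_n}^{c_n},\qquad a_r,c_r\in\Z_+,\ b_j\in\Z,
$$
form a basis of $U_q(\fg)$. Applying $\pi$, the nilpotence of the $\pi(E_{\be_r})$ and $\pi(F_{\be_r})$ confines each exponent $a_r,c_r$ to finitely many values. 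Thus the statement reduces to showing that the commutative subalgebra $A\subset \pi(U_q(\fg))$ generated by $\pi(K_1^{\pm 1}),\ldots,\pi(K_l^{\pm 1})$ is finite dimensional.

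The next step is to handle one index $j$ at a time. For fixed $j$ the elements $E_j,F_j,K_j$ form a quantum $\fs\fl_2$-triple with parameter $q^{d_j}$, which is again not a root of unity since $q$ is not. Choose $m_j$ with $\pi(E_j)^{m_j}=0$. Lemma~\ref{laurinid}, applied to this triple, yields a non-zero Laurent polynomial $p_j\in\CC[K,K^{-1}]$ lying in the two-sided ideal generated by $E_j^{m_j}$. Thus $\pi(p_j(K_j))=0$, and multiplying by a suitable power of the invertible element $\pi(K_j)$ converts this into an ordinary polynomial relation with non-zero constant term. Consequently the subalgebra generated by $\pi(K_j^{\pm 1})$ is finite dimensional, and since the $K_j$ pairwise commute, so is the joint subalgebra $A$.

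Plugging both bounds back into the PBW basis shows that $\pi(U_q(\fg))$ is spanned by finitely many vectors, hence is finite dimensional. The main obstacle is Lemma~\ref{laurinid} itself: manufacturing a non-trivial element of $\CC[K,K^{-1}]$ inside the two-sided ideal generated by $E^m$ will require exploiting the quantum commutation relations, plausibly via iterated commutators with $F$, in analogy with the classical Banach-algebra argument for $U(\fg)$ recalled in the introduction. Everything downstream of that lemma is structural bookkeeping built on the PBW theorem and the elementary fact that a Laurent polynomial relation on an invertible element carves out a finite-dimensional subalgebra.
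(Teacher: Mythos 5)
Your overall strategy --- reduce via the PBW basis to the commutative subalgebra generated by the images of the $K_j^{\pm1}$, then use Lemma~\ref{laurinid} to make each of these images algebraic --- is the same as the paper's. The PBW reduction, the passage from a non-trivial Laurent polynomial relation to algebraicity of an invertible element, and the final spanning argument are all fine.

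The gap is in the middle step. You apply Lemma~\ref{laurinid} to the simple-root triple $(E_j,F_j,K_j)$, and for that you ``choose $m_j$ with $\pi(E_j)^{m_j}=0$''. But the hypothesis of the proposition is that $\pi(E_{\be_r})$ and $\pi(F_{\be_r})$ are nilpotent, where $E_{\be_r}=T_{j_1}\cdots T_{j_{r-1}}(E_{j_r})$ are the braid-group root vectors; nothing is assumed about the generators $E_j$ themselves, and $\pi$ is a homomorphism into an arbitrary algebra, so the Banach-algebra argument of Lemma~\ref{invqnil} is not available here. Nilpotency of $\pi(E_j)$ does follow, but only via the non-obvious identification of $E_j$ with a non-zero scalar multiple of $E_{\be_r}$ for the index $r$ with $\be_r=\al_j$ (this can be extracted from the PBW basis of the positive part together with a weight count, or quoted from Lusztig), and you neither state nor justify it. The paper is structured precisely so as not to need this fact: it uses only $E_{\be_1}=E_{j_1}$, which holds by definition, and for $r>1$ it transports the relation $p(K_{j_r})\in J_r$ (with $J_r$ the ideal generated by $E_{j_r}^m$) through the automorphism $T_{j_1}\cdots T_{j_{r-1}}$ into the ideal $I$ generated by the powers of the $E_{\be_r}$ and $F_{\be_r}$; since $T_i(K_j)=K_jK_i^{-a_{ij}}$, this yields a mixed Laurent relation among $K_{j_1},\ldots,K_{j_r}$ whose leading coefficient is an invertible monomial in the previously handled $K$'s, and an induction on $r$ finishes the argument. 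So either supply the identification $E_j=cE_{\be_r}$ with $c\ne0$ for simple $\be_r=\al_j$, or replace your per-$j$ argument by the paper's braid-group induction.
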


Assuming for the moment that the proposition holds, we can easily prove the theorem.
\begin{proof}[Proof of Theorem~\ref{fdbana}]
(A)~Fix $r\in\{1,\ldots,n\}$. For $\la=n_1\al_1+\cdots+n_l\al_l$, where $n_j\in\Z$ and $\al_1,\ldots,\al_l$ are the simple roots corresponding to $K_1,\ldots,K_l$, put  $K_\la = K_1^{n_1}\cdots K_l^{n_l}$. Then by
\cite[\S\,6.2.3, p.\,176, Proposition 23(iii)]{KSc},
$$
K_\la E_{\be_r} K_\la^{-1} = q^{( \la,\be_r)} E_{\be_r}\quad\text{and}\quad K_\la F_{\be_r} K_\la^{-1}  =
q^{-( \la,\be_r)} F_{\be_r},
$$
where $(\cdot,\cdot)$ is the restriction of the Killing form to $\fh_\R$ (see definitions in \cite[p.\,157--158]{KSc}).
Putting $\la=\be_r$ we have $|q^{( \la,\la)}|\ne 1$ because $|q|\ne1$ and $(\cdot,\cdot )$ is a positive definite bilinear form on $\fh_\R$.

Let~$\pi$ be a homomorphism from $U_q(\fg)$ to a Banach algebra. It follows
from Lemma~\ref{invqnil} with $\ga=q^{( \la,\la)}$ that $\pi(E_{\be_r})$ and $\pi(F_{\be_r})$ are nilpotent for every~$r$. Since~$q$ is not a root of unity, we can apply Proposition~\ref{fdbanpr}.

Part~(B) follows immediately from Part~(A).
\end{proof}

In the proof of Proposition~\ref{fdbanpr} we need  auxiliary lemmas. Suppose that  $E$, $F$ and $K$ are elements of some algebra that satisfy the relations in~\eqref{KEFrel} for given~$q$.

\begin{lm}\label{laurinid}
Let $m\in\N$. Suppose that for each $n\in\{1,\ldots,m\}$ there is a non-trivial Laurent polynomial $P_n$ in~$K$  such that
\begin{equation}\label{commEnF}
[E^n,F]=E^{n-1}P_n.
\end{equation}
Then there is a non-trivial Laurent polynomial in~$K$  that
belongs to the ideal generated by~$E^m$.
\end{lm}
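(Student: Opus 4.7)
The plan is to prove by induction on $k=0,1,\dots,m$ that there is a non-zero Laurent polynomial $Q_k\in\CC[K,K^{-1}]$ such that $E^{m-k}Q_k\in I$, where $I$ denotes the two-sided ideal generated by~$E^m$; the case $k=m$ then produces the required element $Q_m\in I$. The base case is trivial, taking $Q_0:=1$.

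For the inductive step, the key idea is to produce two elements of $I$ of the shape $E^{m-k}F\cdot R(K)$ and then kill the $F$-part using commutativity of Laurent polynomials in~$K$. Assuming $E^{m-k}Q_k\in I$, the relation $R(K)F=F\sigma^{-1}(R)(K)$ (an immediate consequence of~\eqref{KEFrel}, with $\sigma(K)=q^2K$) yields, upon right multiplication by $F$,
\[
E^{m-k}Q_k F \;=\; E^{m-k}F\sigma^{-1}(Q_k)\in I,
\]
while left multiplication by $F$, combined with the hypothesis $[E^{m-k},F]=E^{m-k-1}P_{m-k}$, yields
\[
FE^{m-k}Q_k \;=\; E^{m-k}F Q_k - E^{m-k-1}P_{m-k}Q_k\in I.
\]

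Now right-multiply the first relation by $Q_k$ and the second by $\sigma^{-1}(Q_k)$. Since $Q_k$ and $\sigma^{-1}(Q_k)$ commute in $\CC[K,K^{-1}]$, the two resulting $E^{m-k}F$-terms are identical, so subtracting eliminates them and leaves
\[
E^{m-k-1}\bigl(P_{m-k}Q_k\sigma^{-1}(Q_k)\bigr)\in I.
\]
I then set $Q_{k+1}:=P_{m-k}Q_k\sigma^{-1}(Q_k)$. Non-vanishing is automatic: $P_{m-k}=t_{m-k}K-s_{m-k}K^{-1}\ne 0$ by hypothesis, $\sigma^{-1}$ preserves non-vanishing, $Q_k\ne 0$ by the inductive hypothesis, and $\CC[K,K^{-1}]$ is an integral domain. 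After $m$ steps one obtains $Q_m\in I$, a non-zero Laurent polynomial in~$K$, as required.

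The only subtle point is the choice of right multipliers that makes the $F$-terms cancel: a naive subtraction of the two relations would give $E^{m-k}F(Q_k-\sigma^{-1}(Q_k))$ on the left, which still carries an~$F$, so the $E$-degree refuses to drop. Multiplying first by the commuting pair $Q_k$ and $\sigma^{-1}(Q_k)$ exactly balances the two $F$-parts. With this observation the argument is purely mechanical and does not actually invoke Lemma~\ref{si-1R}; that lemma would enter only if one wanted to control the explicit shape of $Q_m$, for instance as a product of ``slopes'' of the form $\alpha K^j-\beta K^{-j}$.
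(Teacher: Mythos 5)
Your argument is correct, and while it shares the paper's overall skeleton --- an induction that lowers the exponent of $E$ by one per step while accumulating a Laurent-polynomial factor inside the ideal $I$ --- the mechanism of the inductive step is genuinely different. The paper passes from $E^nR_n\in I$ to $E^{n-1}R_{n-1}\in I$ by taking $R_{n-1}$ to be a multiple of $P_n(\sigma^{-1}(R_n)-R_n)$ and invoking a commutator identity for $[E^nR_n,F]$; non-triviality of the new factor then rests on Lemma~\ref{si-1R}, i.e.\ on $q$ not being a root of unity, and the proof must keep track of the explicit two-binomial shape $(\al K^{k}-\be K^{-k})(\al' K^{k}-\be' K^{-k})$ with $k=2^{m-n}$. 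You instead manufacture two elements of $I$ whose $E^{m-k}F$-heads become literally equal after right-multiplication by the commuting pair $Q_k$ and $\sigma^{-1}(Q_k)$, so the subtraction kills the $F$-term outright and leaves $E^{m-k-1}P_{m-k}Q_k\sigma^{-1}(Q_k)\in I$; non-vanishing of $Q_{k+1}$ is then automatic because $\CC[K,K^{-1}]$ is an integral domain and $\sigma^{-1}$ is an automorphism. As a result you never need Lemma~\ref{si-1R}, never need to control the shape of $Q_k$, and in fact never use the hypothesis that $q$ is not a root of unity (only that all $t_n,s_n\ne 0$), so what you prove is formally a little stronger. Your route also sidesteps the identity $[E^nR,F]=[E^n,F](\sigma^{-1}(R)-R)$ on which the paper's step leans: the Leibniz expansion actually gives $[E^nR,F]=[E^n,F]R+E^nF(\sigma^{-1}(R)-R)$, and your two-multiplier trick is precisely what absorbs the extra $E^nF(\cdots)$ term that a direct subtraction of the two relations would leave behind. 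The only thing the paper's version buys in exchange is an explicit factored form of the resulting Laurent polynomial, which is not needed in the applications.
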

\begin{proof}
Denote the ideal generated by $E^m$ by $J$. We claim that for each $n\in\{0,\ldots,m-1\}$ there is a non-trivial  $R_n\in\CC[K,K^{-1}]$ such that $E^nR_n\in J$. This claim with $n=0$ is exactly the assertion of the lemma.

We proceed by reverse induction. Put $R_{m-1}=P_m$.
By~\eqref{commEnF}, the claim holds when $n=m-1$.

Assume now that the claim has been proved for some  $n\le m-1$ and put $$R_{n-1}\!:=P_n R_{n}\si^{-1}(R_{n}).$$
Applying \eqref{commEnF} and then \eqref{FRsi} with $R=R_{n}\si^{-1}(R_{n})$, we have
\begin{multline*}
E^{n-1}R_{n-1}=
E^{n-1}P_n R_{n}\si^{-1}(R_{n})=[E^n,F]R_{n}\si^{-1}(R_{n})\\
=E^nFR_{n}\si^{-1}(R_{n}) -FE^nR_{n}\si^{-1}(R_{n})=E^n\si(R_{n})R_nF -FE^nR_{n}\si^{-1}(R_{n}).
\end{multline*}
Note that $\si(R_{n})$ and $R_n$ commute.
Since  by the induction hypothesis, $E^nR_{n}$ belongs to~$J$, so is $E^{n-1}R_{n-1}$ and the claim is proved.
\end{proof}

\begin{lm}\label{algel}
Let $A$ be an associative algebra  generated by an element~$a$. Then $a$ is algebraic (i.e., there is a non-trivial polynomial~$p$ such that $p(a)=0$) if and only if~$A$ is finite dimensional.
\end{lm}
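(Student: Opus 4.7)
The statement is the standard equivalence between being algebraic and generating a finite-dimensional algebra, so the proof will be short and the main task is simply to organize the two directions cleanly. I will treat $A$ as generated by $a$ in the unital sense (which is the context of its use for quotients of $U_q(\fg)$, a unital algebra); the only small subtlety is making sure powers of $a$ stabilize in a finite-dimensional span.

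For the ``only if'' direction, I would assume that $p(a)=0$ for some non-trivial polynomial $p$, and choose one of minimal degree $n$. Writing $p(x)=\sum_{j=0}^{n} c_j x^j$ with $c_n\ne 0$, the relation $p(a)=0$ expresses $a^n$ as a linear combination of $1,a,\dots,a^{n-1}$. A straightforward induction on $m$ then shows that $a^m$ lies in $\spn\{1,a,\dots,a^{n-1}\}$ for every $m\ge n$: once $a^m$ is in this span, multiplying by $a$ gives $a^{m+1}$ as a combination of $a,a^2,\dots,a^{n-1},a^n$, and the last term is rewritten using the base case. Since $A$ is generated by $a$, it equals $\{p(a):p\in\CC[x]\}$, which is spanned by $1,a,\dots,a^{n-1}$; hence $\dim A\le n$.

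For the ``if'' direction, suppose $\dim A=N<\infty$. Then the $N+1$ elements $1,a,a^2,\dots,a^N$ must be linearly dependent, yielding scalars $c_0,\dots,c_N$, not all zero, with $\sum_{j=0}^{N} c_j a^j=0$. The polynomial $p(x)=\sum_{j=0}^{N} c_j x^j$ is non-trivial and satisfies $p(a)=0$, so $a$ is algebraic.

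There is no real obstacle here; the only point to keep in mind is the interpretation of ``algebra'' — if one prefers a non-unital version, the same argument works verbatim after replacing $1,a,\dots,a^{n-1}$ by $a,\dots,a^{n-1}$ and using the minimality of $p$ to ensure that the top coefficient is invertible in $\CC$ so that $a^n$ can be solved for.
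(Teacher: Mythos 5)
Your proof is correct; the paper itself gives no argument here (it only remarks ``The proof is straightforward''), and your two-direction argument --- expressing $a^n$ in terms of lower powers via a minimal annihilating polynomial, and conversely extracting a linear dependence among $1,a,\dots,a^N$ when $\dim A=N$ --- is exactly the standard proof intended. The only cosmetic point is that minimality of $p$ is not needed to invert the leading coefficient, since any nonzero polynomial over $\CC$ has a nonzero (hence invertible) top coefficient.
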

\begin{proof}
It suffices to note that both conditions, that $a$ is algebraic and that $A$ is finite dimensional, are equivalent to the fact that there is $k\in\N$ such that $a^k$ is a linear combination of $1,a,\ldots,a^{k-1}$.
\end{proof}

\begin{proof}[Proof of Proposition~\ref{fdbanpr}]
Fix~$m$ such that $\pi(E_{\be_r})^m=\pi(F_{\be_r})^m=0$ for every~$r$ and denote by~$I$ the ideal of $U_q(\fg)$ generated by $\{E_{\be_r}^m,\,F_{\be_r}^m;\,r=1,\ldots,n\}$. Since~$\pi$ factors on the quotient homomorphism~$U_q(\fg)\to U_q(\fg)/I$, it suffices to show that the image of $U_q(\fg)/I$ under the induced homomorphism is finite dimensional. The set
$$
\{F_{\be_1}^{r_1}\cdots F_{\be_n}^{r_n} K_1^{k_1} \cdots K_l^{k_l}E_{\be_n}^{s_n} \cdots E_{\be_1}^{s_1}\},
$$
where $r_1,\ldots,r_n$, $s_1,\ldots,s_n$ run through~$\Z_+$ and $k_1,\ldots,k_l$ run through~$\Z$,
is a linear basis of $U_q(\fg)$ (see \cite[\S\,6.2.3, p.\,176, Theorem~24]{KSc} for the statement and \cite{Lu93} for the proof). So it suffices to show  that
the subalgebra~$A$ of $U_q(\fg)/I$ generated by
$$
\{K_j+I,\,K_j^{-1}+I:\,j=1,\ldots,l\}
$$
is finite dimensional. Moreover, since $K_j+I$ pairwise commute, it suffices to show
that the subalgebra generated by~$K_j+I$ and~$K_j^{-1}+I$ is finite dimensional for every $j$.

Indeed, since $E_j$, $F_j$, $K_j$ form a quantum $\fs\fl_2$-triple with parameter $q_j\!:=q^{d_j}$, we have for any $m\ge1$ that
\begin{equation}\label{commEmFts}
[E_j^m,F_j]=E_j^{m-1}(t_{mj} K_j-s_{mj} K_j^{-1}),
\end{equation}
where
$$
t_{mj}\!:=\frac{q_j(q_j^{2m}-1)}{(q_j^2-1)^2},\qquad
s_{mj}\!:=\frac{q_j^{-1}(q_j^{-2m}-1)}{(q_j^{-2}-1)^2};
$$
see, e.g., \cite[Lemma~VI.1.3]{Ka95}.
Since $q$ is not a root of unity, all the coefficients $t_{mj}$ and $s_{mj}$ are non-trivial. Therefore by
Lemma~\ref{laurinid},   for every~$j$ there is a non-trivial Laurent polynomial in~$K_j$  that
belongs to~the ideal generated by $E_j^m$. Since $\{E_1,\ldots,E_l\}\subset \{E_{\be_1} \cdots E_{\be_n}\}$ (see \cite[\S\,I.6.8, p.\,52]{BG02}), this Laurent polynomial is also in~$I$.
Hence $K_j+I$ is algebraic in $U_q(\fg)/I$  and so by
Lemma~\ref{algel}, the subalgebra of $U_q(\fg)/I$ generated by~$K_j+I$ and~$K_j^{-1}+I$ is finite dimensional.
\end{proof}

Now we can prove an analogue of Taylor's theorem on the Arens-Michael envelope of $U(\fg)$.
Recall that an \emph{Arens-Michael algebra} is a complete topological algebra whose topology
can be determined by a system of submultiplicative prenorms $\|\cdot\|$, i.e., the inequality $\|ab\|\le\|a\|\,\|b\|$ holds for every~$a$ and~$b$.
An \emph{Arens-Michael envelope} of an associative algebra~$A$ over~$\CC$
is  a pair $(\wh A, \io_A)$, where
$\wh A$ is an Arens-Michael algebra  and $\io_A$ is a
homomorphism $A \to \wh A$, such that for any
Arens-Michael algebra $B$ and for each homomorphism
$\phi\!: A \to B$ there exists a unique continuous homomorphism
$\widehat\phi\!:\wh A \to B$ making the diagram
\begin{equation*}
  \xymatrix{
A \ar[r]^{\io_A}\ar[rd]_{\phi}&\wh A\ar@{-->}[d]^{\widehat\phi}\\
 &B\\
 }
\end{equation*}
commutative \cite[Chapter~5]{X2}. In fact, it suffices to check this property only for Banach algebras.
Note that $\wh A$ is topologically isomorphic to the completion of~$A$ with respect to the topology determined by
all submultiplicative prenorms.

Let $\fg$ be a   semisimple complex Lie algebra and
$\Si_q$ be the set of the equivalence
classes of irreducible finite-dimensional representations of $U_q(\fg)$ for given $q$.
Then for $\si\in\Si_q$ we have a homomorphism
$U_q(\fg)\to \M_{d_\si}(\CC)$, where $d_\si$ is the dimension of $\si$ and $\M_{d_\si}(\CC)$ is the algebra of matrices of order~$d_\si$.
Denote by $\io$ the corresponding homomorphism
$$
U_q(\fg)\to \prod_{\si\in\Si_q}\M_{d_\si}(\CC).
$$

\begin{thm}\label{AMUq}
Let $\fg$ be a semisimple complex  Lie algebra.
If $|q|\ne 1$, then the algebra
$\prod_{\si\in\Si_q}\M_{d_\si}(\CC)$ endowed with the direct product topology together with $\io$ is the Arens-Michael envelope of $U_q(\fg)$.
\end{thm}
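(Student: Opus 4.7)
The plan is to verify the universal property of the Arens-Michael envelope by reducing, via Theorem~\ref{fdbana}(A), every Banach-algebra target to a finite-dimensional semisimple quotient of $U_q(\fg)$, and to obtain uniqueness from Jacobson density.

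I would begin by checking that the target $\prod_{\si\in\Si_q}\M_{d_\si}(\CC)$ with the direct product topology is an Arens-Michael algebra: its topology is generated by the submultiplicative prenorms $p_F(a):=\max_{\si\in F}\|a_\si\|$ indexed by finite $F\subset\Si_q$ (with the operator norm on each factor), and a product of complete locally convex spaces is complete. Since, as the excerpt notes, the universal property need only be checked against Banach-algebra targets, the task reduces to showing that every homomorphism $\psi:U_q(\fg)\to B$ with $B$ Banach factors uniquely and continuously through $\phi$.

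For existence, Theorem~\ref{fdbana}(A) gives that $A':=\psi(U_q(\fg))$ is finite dimensional. Since $|q|\ne 1$ implies in particular that $q$ is not a root of unity, the classical complete reducibility of finite-dimensional $U_q(\fg)$-modules (see \cite{KSc}) applies to the regular $A'$-module pulled back along $\psi$, so $A'$ is a semisimple $\CC$-algebra. By Wedderburn-Artin one has $A'\cong\bigoplus_{\tau\in F}\M_{d_\tau}(\CC)$ for some finite $F\subset\Si_q$, the summands being the simple $A'$-modules viewed as finite-dimensional irreducible $U_q(\fg)$-representations via $\psi$. I would then define $\widehat\psi$ as the composition of the coordinate projection $\prod_{\si\in\Si_q}\M_{d_\si}(\CC)\to\prod_{\tau\in F}\M_{d_\tau}(\CC)$ with the canonical identification of the latter with $A'$ and the inclusion $A'\hookrightarrow B$; this is continuous, and by unpacking the definitions $\widehat\psi\circ\phi=\psi$.

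The step I expect to be the main obstacle, and the one to carry out last, is uniqueness of $\widehat\psi$, which I would obtain from density of $\phi(U_q(\fg))$ in the product topology. Given any finite $F\subset\Si_q$, the representations indexed by $F$ are pairwise non-isomorphic finite-dimensional irreducibles of $U_q(\fg)$, so by the Jacobson density theorem (in its Burnside form over an algebraically closed field) the joint map $U_q(\fg)\to\prod_{\tau\in F}\M_{d_\tau}(\CC)$ is surjective. Hence $\phi(U_q(\fg))$ projects onto every finite subproduct, which is precisely density in the product topology, and any two continuous homomorphisms into $B$ agreeing on $\phi(U_q(\fg))$ must coincide.
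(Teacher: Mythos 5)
Your proposal is correct and follows essentially the same route as the paper: Theorem~\ref{fdbana}(A) reduces any Banach-algebra target to a finite-dimensional image, complete reducibility for $q$ not a root of unity makes that image a finite product of matrix algebras indexed by $\Si_q$, and the homomorphism therefore factors through the product. The details you add (verifying the Arens--Michael property of the product, Wedderburn--Artin, and the Jacobson-density argument for uniqueness) are exactly the standard points the paper leaves implicit.
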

\begin{proof}
It suffices to show that every homomorphism $\phi$ from  $U_q(\fg)$ to a Banach algebra factors through $\prod_{\si\in\Si_q}\M_{d_\si}(\CC)$. Denote the range of~$\phi$ by~$B$.
By Theorem~\ref{fdbana}, $B$ is finite dimensional and so it is a finite-dimensional Banach algebra.
Then $B$ becomes a $U_q(\fg)$-module with respect to the action given by $a\cdot b\!:=\phi(a)b$. Since $q$ is not a root of unity, any finite-dimensional $U_q(\fg)$-module is completely reducible \cite[Theorem~2]{Ro88}. Therefore $\phi$ factors through some finite product of algebras of the form $\M_{d_\si}(\CC)$ and hence through $\prod_{\si\in\Si_q}\M_{d_\si}(\CC)$.
\end{proof}

In the rest of the article we suppose that $\fg=\fs\fl_2$.

\begin{rem}\label{Uotc2}
Comparing the lists of irreducible finite-dimensional representations of $U(\fs\fl_2)$ and $U_q(\fs\fl_2)$ (see~\eqref{Tnep} below),
we have that for any such representation of $U(\fs\fl_2)$  there are exactly two such representations of $U_q(\fs\fl_2)$. So, in view of Theorem~\ref{AMUq}, we can identify
$\wh U_q(\fs\fl_2)$ with  $\wh U(\fs\fl_2)\otimes \CC^2$ when $|q|\ne 1$. Of course, this isomorphism is not canonical.
\end{rem}

\subsection*{The case when $|q|=1$ but $q$ is not a root of unity}

Suppose that $\fg=\fs\fl_2$.
A power series description of  the Arens-Michael envelope of $U_q(\fs\fl_2)$ in the case when $|q|=1$ is given by Pedchenko in \cite{Pe15}. But it says nothing about Banach space representations.

We recall some representation theory of $U_q(\fs\fl_2)$. If $q$ is not a root of unity, then as mentioned above,  any finite-dimensional representation of $U_q(\fs\fl_2)$ is completely reducible. Moreover, any irreducible finite-dimensional representation of $U_q(\fs\fl_2)$
is associated with a homomorphism of the form
\begin{equation}\label{Tnep}
U_q(\fs\fl_2)\to \M_{n+1}(\CC)\!:\, E\mapsto E_{n,\ep},\quad  F\mapsto F_{n,\ep},\quad K\mapsto K_{n,\ep},
\end{equation}
 where $\ep=\pm1$, $n\in\Z_+$,
 $$
 E_{n,\ep}=\ep
\begin{pmatrix}
 0 &[n]_q &0&\ldots&0\\
 0 &0 &[n-1]_q&\ldots&0\\
 \vdots &\ddots &\ddots&\ddots&\vdots\\
 0 &0 &\ddots&\ddots&1\\
 0 &0 &\ldots&0&0\\
   \end{pmatrix},\quad
   F_{n,\ep}=
\begin{pmatrix}
 0 &0 &\ldots&0&0\\
 1 &0 &\ldots&0&0\\
 0 &[2]_q &\ddots&0&0\\
 \vdots &\ddots &\ddots&\ddots&\vdots\\
 0 &0 &\ldots&[n]_q&0\\
   \end{pmatrix},
$$
$$
 K_{n,\ep}=\ep
\begin{pmatrix}
 q^n &0 &\ldots&0&0\\
 0 &q^{n-2} &\ldots&0&0\\
 \vdots &\ddots &\ddots&\ddots&\vdots\\
 0 &0&\ldots&q^{-n+2}&0\\
 0 &0 &\ldots&0&q^{-n}\\
   \end{pmatrix};
$$
here
$$
[n]_{q}\!:=\frac{q^n-q^{-n}}{q-q^{-1}};
$$
see \cite[\S\,3.2, p.\,62, Proposition~9]{KSc}. So we have an infinite series of irreducible (finite-dimensional) Banach space representations. We now suppose that $|q|=1$ but $q$ is not a root of unity and consider  completions of Verma modules.

Let $\la\in\CC\setminus\{0\}$ and $V(\la)$ denote the Verma module  of $U_q(\fs\fl_2)$ as defined, e.g., in  \cite[p.\,129, Lemma VI.3.6]{Ka95}. Namely, $V(\la)$ is a linear space  with basis $\{e_n\!: \,n\in\N\}$ and the generators of $U_q(\fs\fl_2)$ are represented by infinite matrices:
$$
E\mapsto
\begin{pmatrix}
 0 &-[1]_{q,\la} &0&\ldots&0&\ldots\\
 0 &0 &-[2]_{q,\la}&\ldots&0&\ldots\\
 \vdots &\ddots &\ddots&\ddots&\vdots&\vdots\\
 0 &0 &\ddots&\ddots&-[n]_{q,\la}&\ddots\\
  \vdots &\vdots&\ldots&\vdots&\vdots&\ddots
   \end{pmatrix}\,,
$$
$$
F\mapsto
\begin{pmatrix}
 0 &0 &\ldots&0&\ldots\\
 1 &0 &\ldots&0&\ldots\\
 0 &[2]_q &\ddots&0&\ldots\\
 \vdots &\ddots &\ddots&\ddots&\vdots\\
 0 &0 &\ldots&[n]_q&\ddots\\
 \vdots &\vdots&\ldots&\vdots&\ddots
   \end{pmatrix}\,,\quad
K\mapsto
\begin{pmatrix}
 \la &0 &\ldots&0&\ldots\\
 0 &\la q^{-2} &\ldots&0&\ldots\\
 \vdots &\vdots &\ddots&\vdots&\vdots\\
 0 &0&\ldots&\la q^{-2n}&\vdots\\
 \vdots &\vdots&\ldots&\vdots&\ddots
   \end{pmatrix}\,,
$$
where
\begin{equation}\label{nqla}
[n]_{q,\la}\!:=\frac{q^n\la^{-1}-q^{-n}\la}{q-q^{-1}}.
\end{equation}
(Note that $[n]_q=[n]_{q,1}$.)

Let $p\in[1,+\infty)$. It is easy to see that this representation can be extended to a representation on the Banach space $\ell^p$ if and only if $|q|=1$. We denote this extension by~$S_{\la,p}$. So we have a series of infinite-dimensional Banach space representations of $U_q(\fs\fl_2)$ in contrast to the case when $|q|\ne 1$.

Recall that a representation of an algebra on a Banach space is said to be \emph{topologically irreducible} if there is no proper closed invariant subspace.
In the following proposition we assume that $p=2$ because the argument uses a result of Wermer on  operators on a Hilbert space; see \cite[Theorem~4]{We52}.

\begin{pr}\label{Tla1topirr}
Suppose that $q\in\CC$ is not a root of unity, $|q|=1$ and $\la\ne0$. Then $S_{\la,2}$ is topologically irreducible if and only if~$q^2$ is not a root of~$\la^2$.
\end{pr}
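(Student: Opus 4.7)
My plan is to prove the two directions separately, handling the easy ``only if'' direction by an explicit construction and then doing the work for the ``if'' direction in two stages.

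For the ``only if'' direction, if $q^{2N}=\la^2$ for some $N\ge 1$, then formula~\eqref{nqla} gives $[N]_{q,\la}=0$, so the explicit matrix of $E$ yields $Ee_N=0$. Then $\overline{\spn\{e_n:n\ge N\}}$ is a proper closed subspace of $\ell^2$: it is $E$-invariant because $Ee_N=0$ and, for $n>N$, $Ee_n=-[n]_{q,\la}e_{n-1}$ still lies in the subspace; it is $F$-invariant because $F$ is a forward shift; and it is $K^{\pm1}$-invariant because $K$ is diagonal. Hence $S_{\la,2}$ is not topologically irreducible.

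For the ``if'' direction, suppose $[n]_{q,\la}\ne0$ for every $n\ge 1$ and let $V\subset\ell^2$ be a nonzero closed invariant subspace. My plan is two-step: first I show that $V$ contains some basis vector $e_m$, then I bootstrap from $e_m$ to the whole of $\ell^2$ using $E$ and $F$. The second step is direct: since $q$ is not a root of unity $[k+1]_q\ne0$, so $Fe_k=[k+1]_qe_{k+1}$ propagates membership in $V$ upward from $e_m$; by hypothesis $[k]_{q,\la}\ne0$, so $Ee_k=-[k]_{q,\la}e_{k-1}$ propagates it downward to $e_0$; between them every $e_n$ lies in $V$, and hence $V=\ell^2$.

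The hard step --- where Wermer's theorem enters --- is showing that $V$ must contain some $e_m$. I would use that $|q|=1$ forces $\bar q=q^{-1}$, so a direct computation on the basis gives $K^*e_n=\bar\la q^{2n}e_n=|\la|^2K^{-1}e_n$, i.e.\ $K^*=|\la|^2K^{-1}$ as operators on $\ell^2$. Consequently $V$ is in fact invariant under $K^*$ as well, so under the $*$-algebra generated by $K$. The operator $K$ is normal with simple point spectrum $\{\la q^{-2n}\}$ and the $\{e_n\}$ form a complete orthonormal basis of eigenvectors, so the von Neumann algebra $W^*(K)$ is the masa of diagonal operators. Applying Wermer's theorem (reflexivity of a normal operator on Hilbert space), $V$ is invariant under every spectral projection of $K$; hence $V=\overline{\spn\{e_n:n\in J\}}$ for some $J\subset\Z_+$, and nonzeroness of $V$ forces $J\ne\emptyset$. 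Combining with the second step yields $V=\ell^2$, so $S_{\la,2}$ is topologically irreducible. The hypothesis $p=2$ is used precisely here, both for the Hilbert adjoint identity $K^*=|\la|^2K^{-1}$ and for the spectral-subspace analysis of the $K,K^{-1}$-invariant $V$.
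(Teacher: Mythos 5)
Your proof is correct, and its skeleton matches the paper's: both directions reduce to showing that every nonzero closed invariant subspace is the closed span of the basis vectors it contains, followed by the upward/downward propagation via $F$ and $E$ and the explicit counterexample subspace when some $[N]_{q,\la}=0$. The difference lies in how the key ``spectral'' step is justified. The paper uses only the $K$-invariance of $V$ and invokes Wermer's spectral synthesis theorem for a normal operator whose eigenvalues all lie on the circle of radius $|\la|$: this is a genuinely nontrivial result about (non-reducing) invariant subspaces. You instead exploit the additional $K^{-1}$-invariance of $V$ together with the identity $K^*=|\la|^2K^{-1}$ (valid because $|q|=1$ gives $\bar q=q^{-1}$), which makes $V$ a \emph{reducing} subspace for the normal operator $K$; after that, only the spectral theorem (commutation of $P_V$ with the von Neumann algebra $W^*(K)$, which is the diagonal masa since the eigenvalues $\la q^{-2n}$ are pairwise distinct) is needed, and your appeal to Wermer/reflexivity at that point is really an overstatement --- the double commutant argument already suffices. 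Your route is thus more elementary and self-contained, at the cost of using the full algebra invariance (in particular under $K^{-1}$) rather than invariance under $K$ alone; the paper's route via Wermer isolates the stronger statement that even the $K$-invariant subspaces are synthesized from eigenvectors. Both arguments use $p=2$ and $|q|=1$ in an essential way, as you correctly point out.
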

\begin{proof}
Note that $S_{\la,2}(K)$ is a normal operator and the Hilbert space~$\ell^2$ has an orthonormal basis of eigenvectors of~$S_{\la,2}(K)$. Since the modulus of every eigenvalue equals $|\la|$, spectral  synthesis  holds  for $S_{\la,2}(K)$ \cite[Theorem~4]{We52}, i.e., every non-trivial closed invariant subspace coincides with the closure of all eigenvectors contained in~it.

Since $q$ is not a root of unity, all the numbers $[n]_q$ are non-zero.
It follows from the form of $S_{\la,2}(F)$ that if a non-trivial closed invariant subspace contains~$e_n$ for some $n\in\N$, then it also contains~$e_{n+1}$.
On the other hand, it follows from the form of $S_{\la,2}(E)$ that the closure of the linear span of $\{e_k\!:\,k\ge n+1\}$ is invariant if and only if
$[n]_{q,\la}=0$.
Thus, a non-zero proper closed invariant subspace exists if and only if there is $n$ such that $q^{2n}=\la^2$.
\end{proof}

\subsection*{The case when  $q$ is a root of unity}

Suppose now that $q$ is a root of unity.
It is well known that then $U_q(\fs\fl_2)$ has big centre and this implies that all  irreducible representations of $U_q(\fs\fl_2)$  are finite dimensional \cite[\S\,3.3, Corollary~15, p.\,67]{KSc}. The following proposition holds in contrast to the case when $|q|\ne 1$.

\begin{pr}
Let $q$ be a root of unity.
Then there are infinite-dimensional Banach space representations of $U_q(\fs\fl_2)$.
\end{pr}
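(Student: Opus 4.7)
The plan is to use the Verma-module construction already introduced in the preceding subsection, noting that the criterion there for extending $V(\la)$ to a Banach-space representation is merely $|q|=1$, which holds automatically when $q$ is a root of unity.

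Concretely, I would fix any $\la \in \CC\setminus\{0\}$ and $p \in [1,+\infty)$ and observe that the matrix coefficients $[n]_q$ and $[n]_{q,\la}$ defining the action of $U_q(\fs\fl_2)$ on $V(\la)$ are periodic in~$n$ at a root of unity (since $q^{\pm 1}$ have finite multiplicative order), hence uniformly bounded in~$n$. Consequently the matrices representing $E$, $F$, $K$ and $K^{-1}$ extend from $V(\la)$ to bounded operators on $\ell^p$, and one obtains the representation $S_{\la,p}$ on the infinite-dimensional Banach space $\ell^p$. The verification that these are bounded reduces to checking the three families of entries: $-[k]_{q,\la}$ on the superdiagonal of $E$, $[k]_q$ on the subdiagonal of $F$, and $\la q^{-2n}$ on the diagonal of $K$ (together with $\la^{-1}q^{2n}$ for $K^{-1}$); all of these are bounded in modulus precisely because $|q|=1$.

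There is essentially no technical obstacle beyond this boundedness estimate, which is identical to (in fact easier than) the one used when $|q|=1$ but $q$ is not a root of unity. The only subtlety I would flag is that $S_{\la,p}$ is typically now \emph{reducible}: some $[n]_q$ or $[n]_{q,\la}$ may vanish, producing a closed invariant subspace exactly as in the proof of Proposition~\ref{Tla1topirr}, and at a root of unity this always happens. Hence no analogue of topological irreducibility is to be expected here, but this does not obstruct the construction of $S_{\la,p}$ as a continuous infinite-dimensional representation, which is all that the proposition requires.
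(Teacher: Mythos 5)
Your boundedness argument is fine: for $|q|=1$ (hence for any root of unity) the weights $[n]_q$, $[n]_{q,\la}$ and the diagonal entries $\la q^{-2n}$ are uniformly bounded, so $S_{\la,p}$ is indeed a well-defined representation on $\ell^p$. The gap is in what you take the proposition to assert. A representation on an infinite-dimensional Banach space exists for \emph{every} admissible $q$ -- e.g.\ the $\ell^p$-direct sum of infinitely many copies of a finite-dimensional irreducible representation -- so the statement would be vacuous under your reading and would not contrast with Theorem~\ref{fdbana}. The paper's own proof makes the intended content explicit: one must produce a homomorphism to a Banach algebra with \emph{infinite-dimensional range}. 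Your proposal never addresses the range, and for some of the parameters you allow it genuinely fails: if $\la^2=q^{2k_0}$ for some $k_0$, then with $s$ the order of $q^2$ one has $S_{\la,p}(E)^s=S_{\la,p}(F)^s=0$ and $S_{\la,p}(K)^s=\la^s$, so by the PBW spanning set $F^aK^bE^c$ the image is spanned by finitely many monomials and is finite dimensional. Thus ``fix any $\la$'' is not enough.

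The repair is short but necessary: choose $\la$ with $\la^2\notin\{q^{2k}:k\in\Z\}$ (e.g.\ $|\la|\ne1$, which does not affect boundedness since $|\la q^{-2n}|=|\la|$ is still constant). Then $[k]_{q,\la}\ne0$ for all $k$, so every power $S_{\la,p}(E)^c$ is a non-zero weighted shift of step $c$; these are linearly independent operators, and the range is infinite dimensional. Note also that this route is genuinely different from the paper's, which instead takes the three-parameter family $T_{ab\la}$ of $p'$-dimensional representations at a root of unity, lets $\la$ range over an infinite compact set $K\subset\CC\setminus\{0\}$, and assembles them into a single homomorphism into $C(K,\M_{p'}(\CC))$ with visibly infinite-dimensional range; there the infinite dimensionality comes from the parameter space rather than from unbounded powers of a single operator.
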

\begin{proof}
It suffices to show that there is a homomorphism  to the Banach algebra with infinite-dimensional range.

Let $T_{ab\la}$ be the three-parameter family of $p'$-dimensional representations ($a,b,\la\in \CC$ and $\la\ne 0$) of $U_q(\fs\fl_2)$, where $p'\in\N$, as defined in \cite[\S\,3.3, p.\,68, (34)]{KSc}.
We consider each representation space as a finite-dimensional Hilbert space and assume that the standard basis is normed to one.

Fix $a$ and $b$ and take an infinite compact subset~$K$ of $\CC\setminus\{0\}$.
Then there is $C>0$ such that the norms of $T_{ab\la}(E)$, $T_{ab\la}(F)$ and $T_{ab\la}(K)$ are at most $C$ when $\la$ runs $K$. So we have a homomorphism from $U_q(\fs\fl_2)$ to the Banach algebra
$C(K,\M_{p'}(\CC))$ of matrix-valued continuous functions. It is evident from the explicit form of $T_{ab\la}$  that the range is infinite dimensional.
\end{proof}

\section{Banach space representations and the structure of $\wt U(\fs\fl_2)_\hbar$}
\label{sechbar}

In this section we study representations of the Arens-Michael algebra $\wt U(\fs\fl_2)_\hbar$ introduced
in \cite[\S\,5]{AHHFG}.

Let $\hbar\in\CC$ and $\sinh\hbar\ne 0$. Then $\wt U(\fs\fl_2)_\hbar$ denotes
the universal Arens-Michael algebra generated by $E$, $F$, $H$  subject to relations
\begin{equation}\label{EFHqun}
[H,E]=2E, \quad [H,F]=-2F, \quad [E,F]=\frac{\sinh \hbar H}{\sinh\hbar}\,.
\end{equation}

(The term `universal' means that for any Arens-Michael algebra~$B$ containing elements that satisfy~\eqref{EFHqun} there is a unique continuous homomorphism $\wt U(\fs\fl_2)_\hbar\to B$ sending the generators to that elements.
The universal Arens-Michael algebra is isomorphic to the quotient of the algebra of free entire functions over the closed two-sided ideal generated by the corresponding identities. See, e.g., \cite{Pi15} for the algebras of free entire functions and their quotients.)

The algebra $\wt U(\fs\fl_2)_\hbar$ can be endowed with a structure of a topological Hopf algebra but we do not need it here; see details in~\cite{AHHFG}. Note also that one can define similarly the algebra $\wt U(\fg)_\hbar$ for every  semisimple complex  Lie algebra~$\fg$ [ibid., Remark~5.5]. But here we consider only the case when $\fg=\fs\fl_2$.

First, we discuss a connection between $\wt U(\fs\fl_2)_\hbar$ and $U_q(\fs\fl_2)$.

\begin{lm}\label{EFHKrel}
Let $E$, $F$ and $H$ be elements of an Arens-Michael algebra satisfying~\eqref{EFHqun}. Put
\begin{equation}\label{Kqdef}
K\!:=e^{\hbar H}\quad\text{and}\quad q\!:=e^{\hbar}.
\end{equation}
Then $E$, $F$ and $K$ satisfy~\eqref{KEFrel}.
\end{lm}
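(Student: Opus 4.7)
The plan is to derive each of the three relations in \eqref{KEFrel} directly from \eqref{EFHqun} by exponentiation, relying only on the fact that in an Arens-Michael algebra the exponential series $e^{\hbar H}=\sum_{n\ge 0}(\hbar H)^n/n!$ converges (since the topology is given by submultiplicative prenorms) and that multiplication is jointly continuous, so power-series manipulations are legitimate.

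First I would take the relation $[H,E]=2E$ and rewrite it as $HE=E(H+2)$. An easy induction on $n$ then gives $H^n E=E(H+2)^n$ for every $n\in\Z_+$. Multiplying by $\hbar^n/n!$ and summing, and using continuity of left- and right-multiplication by $E$, I obtain
\begin{equation*}
e^{\hbar H}E=E\,e^{\hbar(H+2)}=e^{2\hbar}E\,e^{\hbar H},
\end{equation*}
i.e., $KE=q^2 EK$. Since $e^{\hbar H}$ is invertible with inverse $e^{-\hbar H}$ (a standard fact that also follows from the power-series identity $e^{\hbar H}e^{-\hbar H}=1$, valid for commuting exponents), $K$ is invertible and multiplying on the right by $K^{-1}$ yields the first relation of \eqref{KEFrel}. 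The second relation is obtained in exactly the same way from $[H,F]=-2F$, which gives $HF=F(H-2)$ and hence $e^{\hbar H}F=e^{-2\hbar}F\,e^{\hbar H}$.

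For the third relation I would simply unpack the definitions. By \eqref{EFHqun},
\begin{equation*}
[E,F]=\frac{\sinh(\hbar H)}{\sinh\hbar}=\frac{e^{\hbar H}-e^{-\hbar H}}{e^{\hbar}-e^{-\hbar}}=\frac{K-K^{-1}}{q-q^{-1}},
\end{equation*}
where the middle equality is the definition of $\sinh$ applied to the element $\hbar H$ (the series converges in the Arens-Michael topology) and the final equality uses $K=e^{\hbar H}$, $K^{-1}=e^{-\hbar H}$ and $q=e^{\hbar}$.

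The only point that might need a brief word is the justification of the formal identity $f(H)E=Ef(H+2)$ for the entire function $f(z)=e^{\hbar z}$. This is where the Arens-Michael hypothesis is used: for any continuous submultiplicative prenorm $\|\cdot\|$, the partial sums of $\sum\hbar^n H^n/n!$ form a Cauchy sequence, and multiplying on the left or right by $E$ commutes with the limit by continuity, so the polynomial identity $H^n E=E(H+2)^n$ passes to the series. No other subtlety arises, so this is a short verification rather than a serious obstacle.
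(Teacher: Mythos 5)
Your proof is correct and follows essentially the same route as the paper: both reduce the first two relations to the identity $e^{\hbar H}E=E\,e^{\hbar(H+2)}$ (and its analogue for $F$) and observe that the third relation is just the definition of $\sinh$ unpacked. The only difference is cosmetic — the paper invokes a general formula for $\ad h(Q)$ from Belti\c{t}\u{a}--\c{S}abac, whereas you justify $h(H)E=Eh(H+2)$ directly by induction on $H^nE=E(H+2)^n$ and passage to the limit, which is an equally valid and slightly more self-contained verification.
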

\begin{proof}
We recall that the well-known formula
\begin{equation}\label{adhQ}
\ad h(Q)(T) = \sum_{n=1}^\infty \frac{1}{n!}  (\ad Q)^n(T)h^{(n)}(Q)
\end{equation}
holds for elements $Q$ and $T$ of an Arens-Michael algebra and an entire function~$h$; cf. \cite[\S\,15,
p.\,82, Corollary~1]{BS01}. (Here $\ad Q(T)\!:=[Q,T]$.)

It follows from $[H,E]=2 E$ that
$(\ad H)^n(E)=2^n E$. Hence by~\eqref{adhQ},
$$
(\ad h(H))E = \sum_{n=1}^\infty \frac{1}{n!}\,  2^n\,E\, h^{(n)}(H)=E\,(h(H+2)-h(H)) ,
$$
i.e., $h(H)E=Eh(H+2)$.
Therefore,
$$
KE=e^{\hbar H}E=E\,e^{\hbar (H+2)}=q^2EK.
$$
Similarly, we have  $KF=q^{-2}FK$. The last relation in~\eqref{KEFrel} is trivial.
\end{proof}

Thus it follows from Lemma~\ref{EFHKrel} that we have a well-defined homomorphism determined by
\begin{equation}\label{homthe}
\te\!:U_q(\fs\fl_2)\to \wt U(\fs\fl_2)_\hbar\!:E\to E,\quad F\to F,\quad K\to e^{\hbar H}.
\end{equation}
(For simplicity of notation, we denote the generators~$E$ and~$F$ in $U_q(\fs\fl_2)$ and $\wt U(\fs\fl_2)_\hbar$ by the same letters.)

The following theorem is the first of two main results in this section. The second is Theorem~\ref{AMUhbar}.

\begin{thm}\label{mainthhbar}
Let  $\sinh\hbar\ne 0$.
Every irreducible continuous Banach space
representation of $\wt U(\fs\fl_2)_\hbar$ is finite dimensional.
\end{thm}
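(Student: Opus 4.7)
The plan is to reduce the problem, via the homomorphism $\theta\colon U_q(\fs\fl_2)\to\wt U(\fs\fl_2)_\hbar$ of~\eqref{homthe}, to Proposition~\ref{fdbanpr}, and then perform a spectral upgrade in order to recover information about the extra generator~$H$. Let $\pi\colon\wt U(\fs\fl_2)_\hbar\to B(X)$ be an irreducible continuous representation, and write $q\!:=e^\hbar$. Independently of~$\hbar$, the first observation is that $\pi(E)$ and $\pi(F)$ are nilpotent: from the undeformed relation $[H,E]=2E$ an easy induction gives $[\pi(H),\pi(E)^n]=2n\pi(E)^n$, and submultiplicativity forces $n\|\pi(E)^n\|\le\|\pi(H)\|\,\|\pi(E)^n\|$, so $\pi(E)^n=0$ once $n>\|\pi(H)\|$; this is the Kleinecke--Shirokov-type argument that underlies Taylor's classical result \cite[\S30, Theorem~2]{BS01}, and the analogous reasoning gives nilpotence of $\pi(F)$.

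Assume first that $q$ is not a root of unity. Proposition~\ref{fdbanpr} applied to the composition $\pi\theta\colon U_q(\fs\fl_2)\to B(X)$ then shows that $D\!:=\pi\theta(U_q(\fs\fl_2))$ is a finite-dimensional subalgebra of $B(X)$. The chief obstacle is that the full image $C\!:=\pi(\wt U(\fs\fl_2)_\hbar)$ is generated not only by~$D$ but also by $\pi(H)$, which does not come from~$\theta$. The key claim is that $\pi(H)$ is algebraic. Indeed $\pi(K)=e^{\hbar\pi(H)}$ lives in the finite-dimensional $D$, so its spectrum is finite; the map $z\mapsto e^{\hbar z}$ has discrete fibres, hence $\sigma(\pi(H))$, being discrete and compact, is finite. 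Decomposing $X=\bigoplus X_\nu$ into generalized $\pi(H)$-eigenspaces by Riesz projections, on each summand $\pi(K)$ satisfies a polynomial with simple roots, and the factorization $e^{\hbar M}-1=\hbar M\,g(\hbar M)$ with $g$ entire and $g(0)=1$ promotes quasinilpotence of $\pi(H)-\nu$ on $X_\nu$ to genuine nilpotence (since $g(\hbar M)$ is invertible when $M$ is quasinilpotent). Hence $\pi(H)$ is annihilated by a polynomial, and $C=D+D\pi(H)+\cdots+D\pi(H)^{k-1}$ is finite dimensional. Irreducibility of $X$ over the finite-dimensional algebra $C$ then forces $\dim X<\infty$, since for any $v\ne 0$ the cyclic subspace $Cv$ is finite dimensional, closed, and invariant, hence all of $X$.

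The hardest case is when $q$ is a root of unity, because Lemma~\ref{laurinid}, and so Proposition~\ref{fdbanpr}, breaks down precisely when some $[n]_q$ vanishes. The approach I would pursue is to rerun the induction of Lemma~\ref{laurinid} directly inside $\wt U(\fs\fl_2)_\hbar$, exploiting the analytic functional calculus of~$H$ available in an Arens-Michael algebra: replace Laurent polynomials in~$K$ by entire functions of~$H$, and use the shift identity $E^n f(H)=f(H+2n)E^n$ together with the commutator formula $[E^n,F]=[n]_q E^{n-1}\sinh(\hbar(H+n-1))/\sinh\hbar$ for those $n$ with $[n]_q\ne 0$ to produce a non-zero entire $f$ such that $f(H)$ lies in the two-sided ideal generated by $E^m$. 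Under~$\pi$ this forces $f(\pi(H))=0$; since $f$ has a discrete zero set and $\pi(H)$ is bounded, $\sigma(\pi(H))$ is again finite, and the remainder of the argument proceeds as before. Overcoming the arithmetic obstruction --- constructing such an $f$ despite the periodic coincidences among the $[n]_q$ that doomed the Laurent-polynomial induction --- is the substantive technical step.
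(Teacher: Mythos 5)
Your treatment of the case when $e^{\hbar}$ is not a root of unity is essentially the paper's proof of Theorem~\ref{fdbcoin}: nilpotency of $\pi(E)$ and $\pi(F)$ from $[H,E]=2E$, Proposition~\ref{fdbanpr} applied to $\pi\te$, algebraicity of $\pi(K)=e^{\hbar\pi(H)}$ upgraded to algebraicity of $\pi(H)$ (your Riesz-projection computation is a hands-on version of Lemma~\ref{algel2}), and a PBW-type spanning argument (the role of Lemma~\ref{fdpolH}) to conclude that the image is finite dimensional. That half is correct.

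The gap is the root-of-unity case, which the hypothesis $\sinh\hbar\ne 0$ does not exclude (it only rules out $e^{2\hbar}=1$; a primitive cube root of unity, say, is allowed). For that case you offer only a program --- produce a non-zero entire $f$ with $f(H)$ in the ideal generated by $E^m$ --- and you explicitly leave its ``substantive technical step'' unresolved, i.e.\ the proof stops exactly where Lemma~\ref{laurinid} stops working: once $[m]_q=0$ the commutator $[E^m,F]$ vanishes and the inductive engine produces nothing, and it is far from clear that any non-zero entire function of $H$ lies in that ideal. The paper closes this case (Theorem~\ref{rouirrfdhb}) by a different and much cheaper mechanism that avoids any such construction. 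If $q=e^{\hbar}$ has order $d$ and $s=d$ or $d/2$, then $K^s$ is central in $U_q(\fs\fl_2)$; since $\te(K^s)=e^{s\hbar H}$ also commutes with $H$, it is central in $\wt U(\fs\fl_2)_\hbar$, so Schur's lemma makes $e^{s\hbar T(H)}$ a scalar on an irreducible representation; Lemma~\ref{algel2} then gives that $T(H)$ is algebraic, and nilpotency of $T(E)$, $T(F)$ together with Lemma~\ref{fdpolH} finishes. Note this is the one place where irreducibility (rather than mere non-degeneracy or continuity) is genuinely used, which is why Theorem~\ref{mainthhbar} is stated only for irreducible representations; any repair of your argument must exploit irreducibility somewhere in this case. (A small side issue: the shift identity should read $f(H)E=Ef(H+2)$, hence $E^nf(H)=f(H-2n)E^n$, not $f(H+2n)E^n$.)
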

We consider separately the cases when $e^{\hbar}$ is not a root of unity and when it is.

\begin{thm}\label{fdbcoin}
Suppose that $e^{\hbar}$ is not a root of unity.

\emph{(A)}~The range of any continuous homomorphism from $\wt U(\fs\fl_2)_\hbar$ to a Banach algebra is finite dimensional.

\emph{(B)}~Any non-degenerate continuous representation of $\wt U(\fs\fl_2)_\hbar$  on a Banach space is finite dimensional.
\end{thm}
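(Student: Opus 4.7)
The plan is to adapt the proof of Theorem~\ref{fdbana} by pulling back along the homomorphism $\theta\colon U_q(\fs\fl_2)\to\wt U(\fs\fl_2)_\hbar$ of~\eqref{homthe}, and then to recover algebraicity of the ``extra'' generator~$H$ by a functional-calculus argument. Let $\pi\colon\wt U(\fs\fl_2)_\hbar\to B$ be a continuous homomorphism to a Banach algebra, and write $e=\pi(E)$, $f=\pi(F)$, $h=\pi(H)$.

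The first step is to show that $e$ and~$f$ are nilpotent. The derivation $\ad h\colon B\to B$ is bounded of norm at most $2\|h\|$, while $[h,e^n]=2n\,e^n$ (which follows by induction from $he=e(h+2)$) exhibits $2n$ as an eigenvalue of $\ad h$ whenever $e^n\ne 0$. Since an eigenvalue of a bounded operator has modulus at most its norm, $|2n|\le\|\ad h\|$, so $e$ is nilpotent; $f$ is handled analogously. The composition $\pi\circ\theta\colon U_q(\fs\fl_2)\to B$ sends $E\mapsto e$, $F\mapsto f$, $K\mapsto e^{\hbar h}$. Since $q=e^\hbar$ is not a root of unity and $(\pi\circ\theta)(E_{\be_1})=e$, $(\pi\circ\theta)(F_{\be_1})=f$ are nilpotent, Proposition~\ref{fdbanpr} applies and gives that $A' := (\pi\circ\theta)(U_q(\fs\fl_2))$ is a finite-dimensional subalgebra of~$B$; in particular $e^{\hbar h}\in A'$ is algebraic.

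The main obstacle is the next step: upgrading algebraicity of $e^{\hbar h}$ to algebraicity of~$h$. Let $p$ be the minimal polynomial of $e^{\hbar h}$ and set $Q(y):=p(e^{\hbar y})$, a nonzero entire function with $Q(h)=0$. By the spectral mapping theorem $\Sp(e^{\hbar h})=\exp(\hbar\,\Sp(h))$, and since the left-hand side is finite while $\Sp(h)$ is compact, $\Sp(h)=\{\al_1,\ldots,\al_s\}$ is finite. Using the Riesz projections $P_i$ and writing $hP_i=\al_iP_i+N_i$ with $N_i$ quasinilpotent, I would expand
\begin{equation*}
Q(h)P_i=\sum_{k\ge m_i}\frac{Q^{(k)}(\al_i)}{k!}\,N_i^k=N_i^{m_i}\,G_i(N_i),
\end{equation*}
where $m_i$ is the order of vanishing of~$Q$ at~$\al_i$ and $G_i$ is entire with $G_i(0)\ne 0$. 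Then $G_i(N_i)$ has spectrum $\{G_i(0)\}$ and so is invertible in the corner of~$B$ cut out by~$P_i$; since $Q(h)P_i=0$, we conclude $N_i^{m_i}=0$, and hence $h$ is algebraic.

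A PBW-type argument then finishes the proof. Algebraicity of~$h$ implies that any entire function of~$h$, in particular $[e,f]=\sinh(\hbar h)/\sinh\hbar$, is a polynomial in~$h$. Combined with $he=e(h+2)$ and $hf=f(h-2)$, this lets us rewrite every word in $\{e,f,h\}$ in the PBW form $f^ah^be^c$; nilpotency of $e,f$ and algebraicity of~$h$ bound $a,b,c$, so the subalgebra of~$B$ generated by $e,f,h$ is finite dimensional. Continuity of~$\pi$ together with the fact that $E,F,H$ topologically generate $\wt U(\fs\fl_2)_\hbar$ then gives that the entire range of~$\pi$ coincides with this finite-dimensional subalgebra, proving~(A). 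Part~(B) follows from~(A) exactly as in Theorem~\ref{fdbana}.
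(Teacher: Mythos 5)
Your proposal is correct and follows essentially the same route as the paper: nilpotency of $\pi(E),\pi(F)$ from the derivation relations, Proposition~\ref{fdbanpr} applied to $\pi\circ\te$ to get algebraicity of $e^{\hbar\pi(H)}$, then algebraicity of $\pi(H)$ itself, and finally a PBW-type spanning argument plus topological generation. Your Riesz-projection computation is just an explicit unwinding of the paper's Lemma~\ref{algel2} (which factors the vanishing holomorphic function as a polynomial times an invertible part), so there is no substantive difference.
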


We need two lemmas. The first improves Lemma~\ref{algel}.
\begin{lm}\label{algel2}
Let $B$ be a Banach algebra (algebraically) generated by an element~$b$.
If there is a non-trivial function~$h$ holomorphic in a neighbourhood~$U$ of the spectrum of~$b$ and satisfying $h(b)=0$, then~$B$ is finite dimensional.
\end{lm}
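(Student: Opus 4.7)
The plan is to reduce the holomorphic relation $h(b)=0$ to a polynomial relation and then apply Lemma~\ref{algel}. The tool is the holomorphic functional calculus on the commutative Banach algebra $B$ (which is commutative since it is algebraically generated by a single element).

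First, I would apply the spectral mapping theorem: since $h(b)=0$, we have $h(\sigma(b))=\sigma(h(b))=\{0\}$, so $\sigma(b)\subset h^{-1}(0)$. Using that $h$ is non-trivial and holomorphic on $U$, I would argue that after shrinking $U$ to a neighbourhood whose components all meet $\sigma(b)$ and on none of which $h$ is identically zero, the zero set $h^{-1}(0)\cap U$ is discrete. Combined with the compactness of $\sigma(b)$, this forces $\sigma(b)=\{z_1,\ldots,z_k\}$ to be finite.

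Next, at each $z_i$ write the local factorisation $h(z)=(z-z_i)^{n_i}g_i(z)$ with $g_i(z_i)\ne0$, where $n_i\ge1$ is the order of the zero. Patching these together, I obtain a decomposition $h=p\cdot g$ on some smaller open neighbourhood $V$ of $\sigma(b)$, where $p(z)\!:=\prod_{i=1}^k(z-z_i)^{n_i}$ is an ordinary polynomial and $g$ is holomorphic on $V$ with $g(z_i)\ne0$ for every $i$. By the holomorphic functional calculus,
\begin{equation*}
0=h(b)=p(b)\,g(b).
\end{equation*}
Since $g$ does not vanish on $\sigma(b)$, the spectral mapping theorem gives $0\notin\sigma(g(b))$, so $g(b)$ is invertible in $B$. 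Multiplying by $g(b)^{-1}$ yields $p(b)=0$, a non-trivial polynomial relation on~$b$. Lemma~\ref{algel} then implies that $B$ is finite dimensional.

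The main obstacle is step (1)—justifying that $\sigma(b)$ is finite from the hypothesis that $h$ is merely non-trivial. One has to rule out the pathological situation in which $h$ vanishes identically on a connected component of $U$ that meets $\sigma(b)$, which would destroy the discreteness of the zero set. The standard way around this is to shrink $U$ to the union of those connected components on which $h$ is not identically zero, and to observe that the hypothesis forces $\sigma(b)$ to lie inside this smaller open set (otherwise $h$ would already have been redundant on the missing components). Once $\sigma(b)$ is known to be finite, the factorisation and functional-calculus steps are routine.
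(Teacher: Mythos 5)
Your proof takes essentially the same route as the paper's: both factor $h$, on a suitably shrunken neighbourhood of $\sigma(b)$, as a polynomial times a holomorphic function that is non-vanishing there, invert the second factor via the holomorphic functional calculus to get $p(b)=0$, and finish with Lemma~\ref{algel}. Your extra preliminaries (finiteness of $\sigma(b)$ via the spectral mapping theorem, and the worry about components of $U$ on which $h$ vanishes identically) do not appear in the paper and do not change the argument; be aware only that your claim that $h(b)=0$ forces $\sigma(b)$ to avoid such components is not actually justified --- the paper sidesteps this by implicitly reading ``non-trivial'' as excluding that degeneracy, and in every application $h$ is a non-zero entire function, so the issue never arises.
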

\begin{proof}
Since the spectrum $b$ is compact, we can assume that~$U$ contains only a finite number of zeros of~$h$. Therefore $h=ph_1$, where~$p$ is a polynomial and~$h_1$ is a function holomorphic and non-vanishing in~$U$. Then $h_1(b)$ is invertible and so $p(b)=0$. An application of Lemma~\ref{algel} completes the proof.
\end{proof}

\begin{lm}
\label{fdpolH}
Suppose that a unital associative algebra is generated by elements $e$,~$f$ and~$h$  such that
$$
[h,e]=\al e, \quad [h,f]=\be f, \quad [e,f]=p(h).
$$
where $\al,\be\in\CC$ and  $p$ is a polynomial. Then  the linear span $L$ of
$\{e^{j} h^{k} f^{n}\}$, where $j,k,n$ run through $\Z_+$, coincides with the whole algebra.
\end{lm}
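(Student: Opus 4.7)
The plan is to show that $L$ is stable under left multiplication by each of the three generators $e$, $f$, $h$. Since $L$ obviously contains $1$ (the monomial with $j=k=n=0$), an induction on the length of words in $e,f,h$ then forces $L$ to coincide with the whole algebra.

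Two of the three cases are essentially immediate. Left multiplication by $e$ is trivial: $e\cdot e^jh^kf^n=e^{j+1}h^kf^n\in L$. For $h$, iterating $[h,e]=\al e$ yields $he^j=e^j h+j\al e^j=e^j(h+j\al)$, so
\[
h\cdot e^j h^k f^n=e^j h^{k+1}f^n+j\al\,e^j h^k f^n\in L.
\]

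The only case requiring actual work is left multiplication by $f$, and here the strategy is the standard PBW reordering. First, I would establish two auxiliary commutation identities. Using $[h,f]=\be f$, induction on $k$ gives $fh^k=(h-\be)^k f$. Using $[e,f]=p(h)$, the identity $[e^j,f]=\sum_{i=0}^{j-1}e^i p(h)e^{j-1-i}$ combined with the consequence $p(h)e^a=e^a p(h+a\al)$ of the $h,e$ relation yields an expression of the form
\[
fe^j=e^j f-e^{j-1}P_j(h),
\]
where $P_j(h):=\sum_{i=0}^{j-1}p(h+(j-1-i)\al)$ is a polynomial in $h$. Plugging these into $f\cdot e^j h^k f^n$ gives
\[
f\cdot e^j h^k f^n=e^j(h-\be)^k f^{n+1}-e^{j-1}P_j(h)\,h^k f^n,
\]
and both summands are visibly linear combinations of monomials of the form $e^a h^b f^c$, hence lie in $L$.

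There is no real obstacle; the only care is in the bookkeeping for the $f$-case, where one must push $f$ past $e^j$ at the cost of a polynomial in $h$ and then push that polynomial past the $h^k$ factor. After these closure statements, the argument concludes: the subspace $L$ contains $1$ and is closed under left multiplication by the generating set $\{e,f,h\}$ of the algebra, so induction on word length shows every monomial in $e,f,h$ lies in $L$, and linearity finishes the proof.
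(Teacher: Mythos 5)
Your proof is correct and follows essentially the same route as the paper: a PBW-type reordering using $fh^k=(h-\be)^kf$ and $fe^j=e^jf-e^{j-1}P_j(h)$, the only (cosmetic) difference being that you organize the induction as closure of $L$ under left multiplication by the generators, whereas the paper checks that a product of two monomials $e^{j_1}h^{k_1}f^{n_1}\cdot e^{j_2}h^{k_2}f^{n_2}$ lies in $L$. Your version is, if anything, slightly more explicit about the commutation identities the paper leaves to the reader.
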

\begin{proof}
It suffices to show that every product of the form
$e^{j_1} h^{k_1} f^{n_1}e^{j_2} h^{k_2} f^{n_2}$, where the exponents are in $\Z_+$, is contained in~$L$. It is obvious that $fe\in L$ and it can be shown by induction (first in~$j_2$ and next in~$n_1$) that the same is true for $f^{n_1}e^{j_2}$.
It is easy to see that the subalgebra generated by $e$ and $h$ (or $f$ and $h$)  coincides with the linear span of $\{e^{j} h^{k} \}$, $j,k\in\Z_+$, (the linear span of $\{ h^{k} f^{n}\} $, $k,n\in\Z_+$, respectively). Therefore the product under consideration is in~$L$.
\end{proof}

\begin{proof}[Proof of Theorem~\ref{fdbcoin}]
(A)~Let $\pi$ be a continuous homomorphism from $\wt U(\fs\fl_2)_\hbar$ to a Banach algebra~$B$.
It is not hard to see that the relations $[H,E]=2E$ and $[H,F]=-2F$ imply that $\pi(E)$ and $\pi(F)$ are nilpotent (cf. \cite[Example 5.1]{Pir_qfree}).

Define $q$ and $K$  as in~\eqref{Kqdef} and consider the homomorphism $\te$ as in~\eqref{homthe}. Since $q$ is not a root of unity,
Proposition~\ref{fdbanpr} implies that the range of $\pi\te$ is finite dimensional and so is the subalgebra generated by~$\pi(K)$. By Lemma~\ref{algel},
$\pi(K)$ is an algebraic element, i.e., there a polynomial $p$ such that $p(\pi(K))=0$.

Since $\pi$ is continuous, $e^{\hbar\pi(H)}=\pi(K)$ and so $p(e^{\hbar\pi(H)})=0$. It follows from Lemma~\ref{algel2} that the algebra~$B_0$ generated by $\pi(H)$ is finite dimensional. Hence~$B_0$ is closed and then is a Banach subalgebra of~$B$. Therefore $\sinh \hbar \pi(H)\in B_0$. So there a polynomial $p_0$ such that $\sinh \hbar \pi(H)=p_0(\pi(H))$. Thus $\pi(E)$, $\pi(F)$ and $\pi(H)$ satisfy the hypotheses of Lemma~\ref{fdpolH} and so the linear span of
$\{\pi(E)^{j} \pi(H)^{k} \pi(F)^{n}\}$, where $j,k,n\in\Z_+$, coincides with the subalgebra $B_1$ generated by these three elements. Since $\pi(E)$ and $\pi(F)$ are nilpotent and $\pi(H)$ is algebraic, $B_1$ is finite dimensional and hence closed. Finally, note that $E$, $F$ and $H$ are topological generators of $\wt U(\fs\fl_2)_\hbar$ and so the range of~$\pi$ equals~$B_1$.

Part~(B) follows immediately from Part~(A).
\end{proof}

Now we turn to the case when $e^{\hbar}$ is a root of unity.
It has been mentioned above that under the same assumption on $q$ every irreducible representation of $U_q(\fs\fl_2)$  is finite dimensional \cite[\S\,3.3, Corollary~15, p.\,67]{KSc}.  Slightly changing the argument, we obtain the same assertion for $\wt U(\fs\fl_2)_\hbar$.

\begin{thm}\label{rouirrfdhb}
Suppose that  $e^\hbar$ is a root of unity. Then  every irreducible continuous
representation of $\wt U(\fs\fl_2)_\hbar$ is finite dimensional. Moreover, there is $s\in\N$ such that all irreducible continuous representations have dimension at most~$s$.
\end{thm}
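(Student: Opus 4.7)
The plan is to adapt the proof of the analogous statement \cite[\S\,3.3, Corollary~15]{KSc} for $U_q(\fs\fl_2)$. Set $q:=e^\hbar$ and let $\ell\in\N$ be the order of $q^2$, so that $q^{2\ell}=1$. A preliminary calculation shows that $K^\ell:=e^{\ell\hbar H}$ lies in the centre of $\wt U(\fs\fl_2)_\hbar$: it commutes with $H$ tautologically and with $E$ via $K^\ell E=q^{2\ell}EK^\ell=EK^\ell$ (and analogously with $F$); centrality extends to the whole algebra since $E$, $F$, $H$ topologically generate it and multiplication is continuous.

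Let $\pi$ be an irreducible continuous representation on a Banach space $V$. By Schur's lemma, $\pi(K^\ell)=cI$ for some $c=e^{\ell\hbar h_0}\in\CC\setminus\{0\}$. The spectral mapping theorem confines $\sigma(\pi(H))$ to the discrete set $h_0+\frac{2\pi i}{\ell\hbar}\Z$, which is finite by boundedness of $\pi(H)$. At each eigenvalue $\lambda$ the Riesz projection yields $\pi(H)=\lambda I+N$ with $N$ quasi-nilpotent on the corresponding spectral subspace, and the identity $e^{\ell\hbar N}=I$, combined with the invertibility of $g(\ell\hbar N)$ for $g(z):=(e^z-1)/z$ (since $\sigma(g(N))=\{g(0)\}=\{1\}$), forces $N=0$; so $\pi(H)$ is diagonalizable with finite spectrum. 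Moreover, the computation $[E,F^\ell]=0$ in $\wt U(\fs\fl_2)_\hbar$, which follows from $\sinh(\ell\hbar)=0$, makes $\ker\pi(F)^\ell$ a closed $\pi$-invariant subspace; irreducibility forces $\pi(F)^\ell=0$, and symmetrically $\pi(E)^\ell=0$.

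Picking an eigenvector $v_0\in V_\lambda$ of $\pi(H)$, the PBW theorem for $\wt U(\fs\fl_2)_\hbar$ (giving a topological basis of monomials $F^a H^b E^c$) together with $\pi(E)^\ell=\pi(F)^\ell=0$ and $\pi(H)v_0=\lambda v_0$ shows that $\pi(\wt U(\fs\fl_2)_\hbar)v_0$ is spanned by the vectors $\pi(F)^a\pi(E)^c v_0$ with $0\le a,c<\ell$, hence is at most $\ell^2$-dimensional. Being finite-dimensional this cyclic subspace is closed, and by irreducibility it equals $V$. Thus $\dim V\le\ell^2$, providing the uniform bound $s:=\ell^2$ asserted in the theorem.

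The main obstacle is the Schur-style reduction of $\pi(K^\ell)$ to a scalar and the elimination of quasi-nilpotent defects on the Riesz spectral subspaces of $\pi(H)$; once these are settled, the nilpotency of $\pi(E),\pi(F)$ of index $\ell$ and the PBW counting argument combine routinely to yield the uniform bound. A secondary subtlety is verifying that $[E,F^\ell]=0$ and $[F,E^\ell]=0$ hold in $\wt U(\fs\fl_2)_\hbar$ itself, which reduces to the vanishing of $\sinh(\ell\hbar)$ via the explicit identity for $[E,F^n]$.
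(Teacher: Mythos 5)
Your overall strategy is the same as the paper's: exhibit $e^{\ell\hbar H}$ as a central element, apply Schur's lemma, deduce that $\pi(H)$ has finite spectrum and is diagonalizable, kill $\pi(E)^\ell$ and $\pi(F)^\ell$, and finish with an $F^aH^bE^c$ spanning argument. Your Riesz-projection elimination of the quasi-nilpotent part is a nice sharpening of the paper's Lemma~\ref{algel2} (which only extracts algebraicity of $\pi(H)$), and your uniform bound $\ell^2$ is weaker than the paper's $s=\ell$ (obtained via the Kassel-style invariant-subspace construction) but suffices for the statement. Two steps, however, are not justified as written. First, you invoke ``the PBW theorem for $\wt U(\fs\fl_2)_\hbar$ (giving a topological basis of monomials $F^aH^bE^c$)''. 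No such topological basis is established in the paper --- indeed even the injectivity of $\te\colon U_q(\fs\fl_2)\to\wt U(\fs\fl_2)_\hbar$ is left open (Question~\ref{injh}) --- and you do not need it: what you need is only that the finite-dimensional subspace $W:=\mathrm{span}\{\pi(F)^a\pi(H)^b\pi(E)^c v_0\}$ is invariant under $\pi(E)$, $\pi(F)$, $\pi(H)$, hence (being closed and the generators being topological generators) under all of $\pi(\wt U(\fs\fl_2)_\hbar)$. That is exactly the content of the paper's Lemma~\ref{fdpolH}, and to apply it you must first know that $[\pi(E),\pi(F)]=\sinh(\hbar\pi(H))/\sinh\hbar$ is a \emph{polynomial} in $\pi(H)$ --- which your diagonalizability of $\pi(H)$ with finite spectrum does deliver via interpolation, so this gap is fixable, but the appeal to a topological PBW basis should be removed.

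Second, the inference ``$\ker\pi(F)^\ell$ is a closed invariant subspace, so irreducibility forces $\pi(F)^\ell=0$'' is incomplete: irreducibility only tells you $\ker\pi(F)^\ell$ is $\{0\}$ or $V$, and you have not ruled out $\{0\}$. You need to know $\ker\pi(F)\ne\{0\}$ first. This is easy to supply --- either by the standard Banach-algebra argument the paper uses ($2n\|\pi(F)^n\|=\|[\pi(F)^n,\pi(H)]\|\le 2\|\pi(F)^n\|\,\|\pi(H)\|$ forces $\pi(F)^n=0$ for $n>\|\pi(H)\|$), or from your own finite-spectrum result (an $\pi(H)$-eigenvector whose eigenvalue $\la$ has $\la-2\notin\sigma(\pi(H))$ is killed by $\pi(F)$) --- but as written the nilpotency of $\pi(E)$ and $\pi(F)$ is asserted rather than proved. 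With these two repairs your argument is correct and runs parallel to the paper's proof.
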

\begin{proof}
Let $K\!:= e^{\hbar H}$, $q\!:=e^\hbar$ and $\te\!:U_q(\fs\fl_2)\to \wt U(\fs\fl_2)_\hbar$ be the  homomorphism given by~\eqref{homthe}.
Suppose that $q$ is a root of unity of degree~$d$.
Put $s\!:=d$  when $d$ is odd and $s\!:=d/2$ when $d$ is even.
It is well known that $E^s$, $F^s$ and $K^s$ are in the
center of $U_q(\fs\fl_2)$ \cite[p.\,134, Lemma~IV.5.3]{Ka95}.
Since~$\te(K^s)$ obviously commutes with~$H$, it is in the center of $\wt U(\fs\fl_2)_\hbar$.

Let $T$ be an irreducible continuous representation of $\wt U(\fs\fl_2)_\hbar$.
It follows from the continuity and Shur's lemma that
$e^{s\hbar T(H)}=T\te(K^s)=\la$ for some $\la\in\CC$. By Lemma~\ref{algel2}, the subalgebra generated by $T(H)$ is finite dimensional and hence closed.  Therefore there is a polynomial $p$ such that $T(\sinh\hbar H/\sinh\hbar)=p(T(H))$.
Since  $T(E)$ and $T(F)$ are nilpotent (cf. the proof of Theorem~\ref{fdbcoin}), it follows from Lemma~\ref{fdpolH} that the range of $T$ is finite dimensional.

To show that there is no irreducible  finite-dimensional representation  of
dimension greater than~$s$ we use the same argument as in
\cite[p.\,134. Proposition VI.5.2]{Ka95}. Indeed, assume to the contradiction that $T$ is such a representation.
Two cases can occur: there  exists  a  non-zero  eigenvector  of $T(H)$ such that
$T(F)v = 0$ or not. In the first case denote by $V'$ the linear span of $\{v,T(E)v,\ldots,  T(E^{s-1})v\}$ and in the second case take a  non-zero  eigenvector  of $T(H)$ such that $T(F)v \ne 0$ and denote by $V''$ the linear span of  $\{v,T(F)v,\ldots,  T(F^{s-1})v\}$. It follows from $[H,E]=2E$ and $[H,F]=-2F$ that $V'$ and $V''$ respectively, are invariant under $T(H)$. Moreover, as it is proved in [ibid.], in the first and second cases, $V'$ and $V''$, respectively, are invariant under both $T(E)$ and $T(F)$. Thus we have an invariant subspace of dimension~$s$.
\end{proof}

Combining Theorem~\ref{rouirrfdhb} with Part~(B) of Theorem~\ref{fdbcoin}, we immediately deduce  Theorem~\ref{mainthhbar}.

\subsection*{A classification of irreducible continuous representations of $\wt U(\fs\fl_2)_\hbar$ in the case when $e^{\hbar}$ is not a root of unity}

Let $n\in\Z_+$, $k\in\Z$ and $\ep\in\{-1,1\}$. Put
$$
H_{n,k,\ep}\!:=
\begin{pmatrix}
 n+r_{k,\ep} &0 &\ldots&0&0\\
 0 &n-2+r_{k,\ep} &\ldots&0&0\\
 \vdots &\ddots &\ddots&\ddots&\vdots\\
 0 &0&\ldots&-n+2+r_{k,\ep}&0\\
 0 &0 &\ldots&0&-n+r_{k,\ep}\\
   \end{pmatrix}\,,
$$
where
$r_{k,1}\!:=2k\pi \hbar^{-1}i$ and $r_{k,-1}\!:=(2k+1)\pi \hbar^{-1}i$.

It is not nard to see that $E_{n,\ep}$, $F_{n,\ep}$ (see~\eqref{Tnep}) and $H_{n,k,\ep}$ satisfy the relations in~\eqref{EFHqun}. Recall that $\wt U(\fs\fl_2)_\hbar$ is defined as a universal algebra; so we have a continuous $(n+1)$-dimensional representation of it determined by
\begin{equation}\label{defTnkep}
 E\mapsto E_{n,\ep},\quad  F\mapsto F_{n,\ep},\quad H\mapsto H_{n,k,\ep}.
\end{equation}
We denote this representation by $T_{n,k,\ep}$ and the representation of $U_q(\fs\fl_2)$ defined in~\eqref{Tnep} by $T_{n,\ep}$. It is easy to see that $T_{n,\ep}= T_{n,k,\ep} \te$.

\begin{rem}
When $|e^{\hbar}|=1$ and $e^{\hbar}$ is not a root of unity, the Banach space representation theory of $\wt U(\fs\fl_2)_\hbar$ is quite different from that of $U_q(\fs\fl_2)$. For example, the infinite-dimensional representation $S_{\la,2}$ in Proposition~\ref{Tla1topirr} cannot be modified  in the same way as $T_{n,\ep}$ was obtained  from  $T_{n,k,\ep}$ because in this case we get a matrix with unbounded sequence of eigenvalues.
Moreover, $\wt U(\fs\fl_2)_\hbar$  has no continuous infinite-dimensional representation on a  Banach space as is shown in Theorem~\ref{fdbcoin}.
\end{rem}

\begin{thm}\label{hbirrnotr}
Suppose that $e^{\hbar}$ is not a root of unity.

\emph{(A)}~Every representation  $T_{n,k,\ep}$ defined by~\eqref{defTnkep} is irreducible.

\emph{(B)}~Two representations  $T_{n,k,\ep}$ and $T_{n',k',\ep'}$ are equivalent only when $n=n'$, $k=k'$ and $\ep=\ep'$.

\emph{(C)}~Every continuous irreducible representation of $\wt U(\fs\fl_2)_\hbar$ on a Banach space  is finite dimensional and equivalent to some~$T_{n,k,\ep}$, where $n\in\Z_+$, $k\in\Z$ and $\ep\in\{-1,1\}$.
\end{thm}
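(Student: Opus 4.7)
For Part~(A), the identity $T_{n,\ep}=T_{n,k,\ep}\te$ observed just before the statement does most of the work. Any subspace $W\subseteq\CC^{n+1}$ invariant under $T_{n,k,\ep}$ is in particular stable under $H_{n,k,\ep}$, and since $H_{n,k,\ep}$ is diagonal, a direct check on diagonal entries gives $e^{\hbar H_{n,k,\ep}}=K_{n,\ep}$; as a polynomial in $H_{n,k,\ep}$ (by Lagrange interpolation, say) this operator also preserves $W$. Thus $W$ is invariant under $E_{n,\ep}$, $F_{n,\ep}$, $K_{n,\ep}$, i.e.\ $T_{n,\ep}$-invariant, and the known irreducibility of $T_{n,\ep}$ recalled in~\eqref{Tnep} (with $q=e^\hbar$ not a root of unity) forces $W\in\{0,\CC^{n+1}\}$.

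For Part~(B), any intertwining isomorphism $U$ between $T_{n,k,\ep}$ and $T_{n',k',\ep'}$ also intertwines $T_{n,\ep}$ with $T_{n',\ep'}$ after composing with~$\te$, so the classification of finite-dimensional irreducibles via~\eqref{Tnep} forces $n=n'$ and $\ep=\ep'$. Schur's lemma applied to the irreducible $U_q(\fs\fl_2)$-representation $T_{n,\ep}$ then gives $U=cI$, and the remaining equation $UH_{n,k,\ep}=H_{n,k',\ep}U$ collapses to $r_{k,\ep}=r_{k',\ep}$, which (since $\hbar\ne0$) yields $k=k'$.

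For Part~(C), Theorem~\ref{mainthhbar} already reduces matters to a representation $T$ on a finite-dimensional space $V$. Decomposing $V$ into generalized $T(H)$-eigenspaces, the relation $[H,E]=2E$ shows that $T(E)$ shifts eigenvalues by $+2$, so it must be nilpotent in finite dimension; likewise $T(F)$. Hence $\ker T(E)$ is a nonzero $T(H)$-stable subspace, so contains an eigenvector $v$ with $T(H)v=\mu v$ for some $\mu\in\CC$. Put $w_j\!:=T(F)^jv$; a short induction with~\eqref{EFHqun} yields $T(H)w_j=(\mu-2j)w_j$ and $T(E)w_j=a_jw_{j-1}$ with
\begin{equation*}
a_j=\sum_{i=0}^{j-1}\frac{\sinh(\hbar(\mu-2i))}{\sinh\hbar}.
\end{equation*}
The span of the nonzero $w_j$'s is $\wt U(\fs\fl_2)_\hbar$-invariant and therefore equals $V$ by irreducibility, giving $\dim V=n+1$ where $n$ is the largest index with $w_n\ne0$. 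The forced equality $a_{n+1}=0$ becomes, after setting $z\!:=e^{\hbar\mu}$ and summing the two geometric series,
\begin{equation*}
[n+1]_q(zq^{-n}-z^{-1}q^n)=0,
\end{equation*}
and since $q=e^\hbar$ is not a root of unity, $[n+1]_q\ne0$, so $z^2=q^{2n}$, i.e.\ $e^{\hbar\mu}=\ep q^n$ for some $\ep\in\{\pm1\}$. Solving for $\mu$ places it precisely in the set $\{n+r_{k,\ep}:k\in\Z\}$. Finally, rescaling $w_j$ by $[j]_q!$ so as to match the normalization of $F_{n,\ep}$ identifies the matrices of $T(E)$, $T(F)$, $T(H)$ with $E_{n,\ep}$, $F_{n,\ep}$, $H_{n,k,\ep}$, yielding $T\cong T_{n,k,\ep}$.

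The main obstacle is the step in Part~(C) that pins $\mu$ down to the discrete lattice $\{n+r_{k,\ep}\}$. The nilpotency argument and the highest-weight reduction are classical, but the transcendental commutator $[E,F]=\sinh(\hbar H)/\sinh\hbar$ replaces the familiar polynomial-in-$H$ quantization condition by a hyperbolic-sum identity, and only after the geometric-series collapse combined with the hypothesis that $e^\hbar$ is not a root of unity does the relation reduce to the clean quantization $e^{\hbar\mu}=\ep q^n$.
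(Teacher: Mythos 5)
Your argument is correct and follows the paper's strategy in all three parts: (A) and (B) reduce to the known classification of irreducible $U_q(\fs\fl_2)$-modules by composing with $\te$, and (C) is the standard highest-weight analysis after the reduction to finite dimension. The only real difference is in (C), and it is one of packaging rather than substance: the paper transfers the highest weight vector to a $U_q(\fs\fl_2)$-module structure and cites Kassel's Theorem VI.3.5 for the quantization condition $e^{\hbar\al}=\ep q^{n}$ (then lifts back via Lemma~\ref{hiwve}), whereas you re-derive that condition in-house from the $\sinh$ commutator by collapsing the geometric series to $[n+1]_q(zq^{-n}-z^{-1}q^{n})=0$ — the same computation that underlies the cited result.
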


We need a lemma.
Let $V$ be a $\wt U(\fs\fl_2)_\hbar$-module.
We say that an eigenvector~$v$ of~$H$
in~$V$  is a  \emph{weight vector} for $\wt U(\fs\fl_2)_\hbar$. The corresponding eigenvalue $\al$ is called a \emph{weight}. If,  in  addition,
$E\cdot v  = 0$,  then  $v$  is called a  \emph{highest weight vector} of weight $\al$ (cf. the versions for $U(\fs\fl_2)$ and  $U_q(\fs\fl_2)$  in \cite[p.\,101, Definition V.4.1 and  p.\,127, Definition VI.3.2]{Ka95})

\begin{lm}\label{hiwve} \emph{(cf. \cite[Lemmas V.4.3 and VI.3.4]{Ka95})}
Let $v$  be  a highest weight vector for $\wt U(\fs\fl_2)_\hbar$ of weight $\al$. For any $p\in\N$ put $v_p\!:=F^p\cdot v/[p]_q$.
Then
$$
H\cdot v_p=(\al-2p)v_p\,,\qquad E\cdot v_p=-[p-1]_{q,\la}v_{p-1}\,,\qquad F\cdot v_{p-1}=[p]_{q}v_p\,.
$$
\end{lm}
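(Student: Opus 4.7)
The strategy is to reduce the lemma to the corresponding classical fact for $U_q(\fs\fl_2)$. Because $v$ satisfies $H\cdot v=\al v$ and $E\cdot v=0$, and the module structure is continuous (so that the entire functional calculus defining $K=e^{\hbar H}$ transports to $v$), we obtain $K\cdot v=\la v$ with $\la:=e^{\hbar\al}$; this is the hidden identification behind the parameter $\la$ in the formula for $E\cdot v_p$. Lemma~\ref{EFHKrel} ensures that $E,F,K$ satisfy the defining relations of $U_q(\fs\fl_2)$ for $q=e^{\hbar}$, so via the homomorphism $\te$ of~\eqref{homthe} the vector $v$ is a highest weight vector of weight $\la$ for $U_q(\fs\fl_2)$ in the classical sense.

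With this setup, and reading the definition of $v_p$ with the quantum factorial $[p]_q!$ in the denominator (as is required for the three identities to be mutually consistent), the identity $F\cdot v_{p-1}=[p]_q v_p$ is immediate. The identity $H\cdot v_p=(\al-2p)v_p$ uses only the relation $[H,F]=-2F$: an induction on $p$ based on $HF=FH-2F$ gives $HF^p v=(\al-2p)F^p v$, after which one divides by $[p]_q!$.

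The substantive identity is $E\cdot v_p=-[p-1]_{q,\la}v_{p-1}$. I would prove it by induction on $p$, with base case $p=1$ given directly by $EFv=[E,F]v=(\la-\la^{-1})/(q-q^{-1})\,v=-[0]_{q,\la}v_0$. For the inductive step, write $EF^p v=FEF^{p-1}v+[E,F]F^{p-1}v$; apply the inductive hypothesis to the first summand and evaluate the second using $[E,F]=(K-K^{-1})/(q-q^{-1})$ together with $KF^{p-1}v=\la q^{-2(p-1)}F^{p-1}v$, where the latter comes from iterating $KF=q^{-2}FK$ and applying $Kv=\la v$. Collecting terms and dividing by $[p]_q!$ reduces the step to a short algebraic identity among $[p]_q$, $[p-1]_q$, $[p-1]_{q,\la}$ and $[p-2]_{q,\la}$ that is straightforward to check from the defining formulas. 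Alternatively, the entire content of this identity can be imported by invoking the parallel statement for $U_q(\fs\fl_2)$ from \cite[Lemma~VI.3.4]{Ka95} through the $\te$-identification above.

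The only conceptual point requiring care is the passage from $H\cdot v=\al v$ to $K\cdot v=\la v$, which rests on continuity of the module structure; this is not a genuine obstacle but rather a routine appeal to the holomorphic functional calculus. I expect the argument overall to be a transcription of the classical quantum-$\fs\fl_2$ computation, with the only novelty being this continuity bookkeeping.
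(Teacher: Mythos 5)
Your proposal is correct and follows essentially the same route as the paper: the first identity is derived directly from $[H,F]=-2F$, and the other two are obtained by viewing $v$ as a highest weight vector for $U_q(\fs\fl_2)$ of weight $\la=e^{\hbar\al}$ via $\te$ and invoking \cite[Lemma VI.3.4]{Ka95} (the direct induction you sketch is a valid but unnecessary alternative). Your two side observations --- that the denominator in the definition of $v_p$ must be the quantum factorial $[p]_q!$ for the three identities to be consistent, and that $\la=e^{\hbar\al}$ is the implicit identification justified by continuity of the module structure --- are both accurate and match the paper's intent.
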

\begin{proof}
The first equality easily follows the relation $[H,F^p]=-2pF^p$. On the other hand, $v$ is a highest weight vector for $U_q(\fs\fl_2)$ of weight~$e^{\hbar\al}$. So the second and third equalities immediately  follow from \cite[Lemma VI.3.4]{Ka95}.
\end{proof}

\begin{proof}[Proof of Theorem~\ref{hbirrnotr}]
We use the representation theory of $U_q(\fs\fl_2)$.

(A)~Since $T_{n,k,\ep} \te$ coincides with the irreducible representation  $T_{n,\ep}$ of  $U_q(\fs\fl_2)$, the representation $T_{n,k,\ep}$ is also irreducible.

(B)~Note that $T_{n,k,\ep} \te$ and $T_{n',k',\ep'} \te$ are equivalent only when $n=n'$ and $\ep=\ep'$ (see \cite[p.\,128]{Ka95} or \cite[\S\,3.2, p.\,62, Proposition~8]{KSc}). On the other hand, the sets of eigenvalues of $H_{n,k,\ep}$ and $H_{n,k',\ep}$ coincide only when $k=k'$.

(C)~Let $T$ be a continuous irreducible representation of $\wt U(\fs\fl_2)_\hbar$ on a Banach space~$V$. By Theorem~\ref{fdbcoin},  $V$ is finite dimensional. It is easy to see from the relation $[H,E]=2E$ that any non-zero finite-dimensional $\wt U(\fs\fl_2)_\hbar$-module  has  a
highest weight vector~$v$ \cite[p.\,101, Proposition V.4.2]{Ka95}. Denote the corresponding weight by~$\al$. Then~$v$ is a highest weight vector for $U_q(\fs\fl_2)$ of weight~$e^{\hbar\al}$ and we can apply \cite[p.\,128, Theorem VI.3.5]{Ka95}.
In particular,  $e^{\hbar\al}=\ep e^{\hbar n}$ for some $n\in\Z_+$ and $\ep\in\{-1,1\}$, $v_p=0$ for $p>n$ and $\{v_0,v_1 , \ldots,v_n\}$  is  a  basis of an irreducible $U_q(\fs\fl_2)$-submodule~$V'$ of~$V$. Then
$\al=n+r_{k,\ep}$ for some $k\in\Z$. Finally,   Lemma~\ref{hiwve} implies that $V'$ is a $\wt U(\fs\fl_2)_\hbar$-submodule and so $V=V'$ being irreducible. Thus $T$ has the desired form.
\end{proof}

\subsection*{The structure of $\wt U(\fs\fl_2)_\hbar$ in the case when $e^{\hbar}$ is not a root of unity}

\begin{thm}\label{hbarcompred}
Suppose that $e^{\hbar}$ is not a root of unity. Then
every continuous finite-dimensional representation of $\wt U(\fs\fl_2)_\hbar$ is completely reducible.
\end{thm}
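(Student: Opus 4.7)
The plan is to exploit Rosso's complete reducibility theorem for finite-dimensional $U_q(\fs\fl_2)$-modules when $q$ is not a root of unity (already invoked in the proof of Theorem~\ref{AMUq}) together with the classification from Theorem~\ref{hbirrnotr}. Given a continuous finite-dimensional representation $T$ of $\wt U(\fs\fl_2)_\hbar$ on $V$, composition with the homomorphism $\te$ from~\eqref{homthe} turns $V$ into a $U_q(\fs\fl_2)$-module (with $q = e^\hbar$), which by \cite[Theorem~2]{Ro88} is completely reducible. Thus $V$ admits a $U_q$-isotypic decomposition $V = \bigoplus_{(n,\ep)} V_{(n,\ep)}$, where $V_{(n,\ep)}$ is the sum of $U_q$-submodules isomorphic to $T_{n,\ep}$.

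The first step is to show that each isotypic component $V_{(n,\ep)}$ is in fact an $\wt U(\fs\fl_2)_\hbar$-submodule, i.e., invariant under $T(H)$. The key point is that $\ad H$ is a derivation of $\wt U(\fs\fl_2)_\hbar$ sending $E \mapsto 2E$, $F \mapsto -2F$, and $e^{\pm\hbar H} \mapsto 0$, and therefore preserves the subalgebra $\te(U_q(\fs\fl_2))$. Let $M$ denote the space of $U_q$-highest weight vectors of type $T_{n,\ep}$, characterized by $T(E)w = 0$ and $T(K)w = \ep q^n w$ (where $T(K) = e^{\hbar T(H)}$); a short check using that $q$ is not a root of unity shows $M \subseteq V_{(n,\ep)}$. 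For $w \in M$ one has $T(K)T(H)w = T(H)T(K)w = \ep q^n\, T(H)w$ and $T(E)T(H)w = T(H)T(E)w - 2T(E)w = 0$, so $M$ is $T(H)$-invariant. Since $V_{(n,\ep)} = \te(U_q)\cdot M$, the Leibniz-type identity $T(H)\te(u) = \te(u)T(H) + [T(H),\te(u)]$ together with $[H,\te(u)] \in \te(U_q)$ then propagates $T(H)$-invariance from $M$ to all of $V_{(n,\ep)}$.

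The second ingredient is that $T(H)$ is diagonalizable on $V$. Indeed, $T(K)$ is diagonalizable since it acts diagonally on each irreducible $U_q$-summand; writing $T(H) = S + N$ for the Jordan decomposition and using $T(K) = e^{\hbar S} e^{\hbar N}$ with $\hbar\ne 0$, uniqueness of the multiplicative Jordan decomposition of the semisimple operator $T(K)$ forces $e^{\hbar N} = I$ and hence $N = 0$. Restricted to $M$, therefore, $T(H)$ is diagonalizable, and we may choose an $H$-eigenbasis $w_1,\dots,w_m$ of $M$ (where $m$ equals the multiplicity of $T_{n,\ep}$ in $V$). Each $w_i$ is a highest weight vector for $\wt U(\fs\fl_2)_\hbar$ in the sense of Lemma~\ref{hiwve}, so the argument of Theorem~\ref{hbirrnotr}(C) shows that the $\wt U$-submodule $W_i$ generated by $w_i$ coincides with $\te(U_q)\cdot w_i$, has dimension $n+1$, and is irreducible. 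A dimension count $\dim V_{(n,\ep)} = m(n+1) = \sum_i \dim W_i$, together with the linear independence of the tops $w_i$, forces $V_{(n,\ep)} = \bigoplus_i W_i$ as $\wt U$-modules; summing over $(n,\ep)$ yields the desired decomposition of $V$.

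The main obstacle I expect is the first step: verifying that the $U_q$-isotypic components are $T(H)$-invariant. This is where the extra generator $H$ must be reconciled with the $U_q$-module structure, and it is the place where the derivation property of $\ad H$ on $\te(U_q)$ plays an essential role. Everything else is either a standard consequence (Rosso, Jordan decomposition, dimension count) or a direct application of the highest-weight machinery already developed for $\wt U(\fs\fl_2)_\hbar$.
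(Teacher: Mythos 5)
Your proof is correct, but it takes a genuinely different route from the paper's. The paper follows Kassel's proof of complete reducibility for $U_q(\fs\fl_2)$ (\cite[Theorem VII.2.2]{Ka95}) step by step: it first treats the case where the submodule $V'$ has codimension one by induction on $\dim V'$, using the quantum Casimir element $\te(C_q)$ (which remains central in $\wt U(\fs\fl_2)_\hbar$) to split off a simple $V'$, and then reduces the general case to codimension one by making the space of linear maps $V\to V'$ into a module via the topological Hopf structure (comultiplication and antipode) of $\wt U(\fs\fl_2)_\hbar$. You instead take complete reducibility of the underlying $U_q(\fs\fl_2)$-module (Rosso's theorem, already invoked in the proof of Theorem~\ref{AMUq}) as a black box and upgrade the $U_q$-isotypic decomposition to an $\wt U(\fs\fl_2)_\hbar$-decomposition. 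The two new ingredients you supply are both sound: the space $M$ of $U_q$-highest weight vectors of a fixed type $(n,\ep)$ lies in $V_{(n,\ep)}$ (because $q$ not a root of unity separates the weights $\ep q^n$) and is $T(H)$-stable; and $T(H)$ is diagonalizable because $e^{\hbar T(H)}=T(K)$ is semisimple, so uniqueness of the multiplicative Jordan decomposition together with $\hbar\ne 0$ kills the nilpotent part. The concluding count $\dim V_{(n,\ep)}=m(n+1)=\sum_i\dim W_i$ then forces $\sum_i W_i$ to be direct. Your route is shorter and avoids both the Casimir element and the Hopf-algebra machinery (the paper needs \cite[Proposition~5.2]{AHHFG} for the latter), at the price of leaning on Rosso's theorem, the explicit classification of irreducible $U_q(\fs\fl_2)$-modules, and the relation $K=e^{\hbar H}$; the paper's argument is self-contained at the level of Kassel's textbook proof and shows that the classical homological splitting argument survives the passage to the analytic form.
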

\begin{proof}
We follow the argument in \cite[Theorem VII.2.2]{Ka95} with necessary modifications.

Let $V$ be a continuous finite-dimensional $\wt U(\fs\fl_2)_\hbar$-module and $V'$ is a proper submodule of $V$. We need to show that $V'$ can be complemented

(1)~Suppose that $V'$ is of codimension~$1$. We proceed by induction on the
dimension of~$V'$.

If $\dim V'=0$ the assertion is evident.
Let $\dim V'=1$. Then $V'$ and $V/V'$  are simple one-dimensional modules of weights $\al_1$ and $\al_2$, respectively.
If $\al_1\ne \al_2$, then it is easy to see that there is a submodule complementary to $V'$.
If $\al_1= \al_2$, then there exists a basis $\{v_1, v_2 \}$  with $V' = \CC v_1$ such that
$H\cdot v_1  = \al v_1$ and $H\cdot v_2  = \al v_2 +\al' v_1$.  Since the representation is continuous,  we have that $K\cdot v_1  = e^\al v_1$ and $K\cdot v_2  = e^\al v_2 +e^\al \al' v_1$. Arguing as in \cite[Theorem VII.2.2]{Ka95}, we get that $E$ and $F$ act on $V$ trivially and, moreover,  $\al'=0$ and hence $H$ is diagonalizable. This implies again that there is a complementary submodule.

We  now  assume  that $p>1$ and  the  assertion  is
proved in dimensions  smaller than~$p$.  Let $\dim V'= p$.
If $V'$ is not simple, then it contains a submodule $V_0$ such that $\dim V_0< p$. So we can apply the induction hypothesis and deduce the assertion by a standard argument (cf. the proof of \cite[Theorem V.4.6]{Ka95}).

Suppose now that $V'$ is simple.
Recall that the quantum Casimir element
$$
C_q\!:=EF+\frac{q^{-1}K+qK^{-1}}{(q-q^{-1})^2}
$$
is central in $U_q(\fs\fl_2)$
\cite[Proposition VI.4.1]{Ka95}.
It is easy to see that $EF$ and $H$ commute and hence
$\te(C_q)$ is central in $\wt U(\fs\fl_2)_\hbar$ ($\te$ is defined in~\eqref{homthe}). Moreover, there is $\mu\in\CC$ such that $\te(C_q)+\mu$ acts by $0$ on the $1$-dimensional module $V/V'$ and by a non-zero  scalar on $V'$ \cite[Lemma VII.2.1]{Ka95}.  Arguing as in Part 1.b in the proof of \cite[Theorem VII.2.2]{Ka95}, we deduce that $V'$ can be complemented.

(2)~We now reduce the assertion of the theorem to the case of codimension~$1$. Consider the vector space $W$  of  linear maps from $V$  to $V'$ whose restriction
to $V'$ is multiplication by a scalar and the vector subspace $W'$ of  linear maps such that this scalar is~$0$. It is obvious that $W'$ has codimension $1$.

To endow $W$ and $W'$ with module  structures we need the fact that $\wt U(\fs\fl_2)_\hbar$ is a topological Hopf algebra \cite[Proposition~5.2]{AHHFG}. Denote the comultiplication, counit and antipode  by $\De$,
$\ep$ and  $S$, respectively.

Note that $\wt U(\fs\fl_2)_\hbar\ptn V$ is a $\wt U(\fs\fl_2)_\hbar\ptn \wt U(\fs\fl_2)_\hbar$-module with the multiplication determined by $(x\otimes y)\cdot (z\otimes v)\!:=xy\otimes z\cdot v$. (Here $\ptn$ denotes the complete projective tensor product of Fr\'echet spaces.)
Then the vector space of all linear maps from $V$  to $V'$ is a $\wt U(\fs\fl_2)$-module with respect to
the multiplication determined by
$$
(x\cdot f)(v)\!:=(1\otimes f)((1\otimes S)\De(x)\cdot (1\otimes v)),
$$
where  $f\!:V\to V'$  is a linear map, $x\in \wt U(\fs\fl_2)_\hbar$ and $v\in V$;
cf. \cite[\S\,III.5, p.\,58, (5.5)]{Ka95}. It is not hard to see that  $W$ and $W'$ are modules with respect to this action.

Since $W'$ has codimension $1$, it follows from Part~(1) of this proof, that there is a submodule $W''$ such that $W=W'\oplus W''$. Then there is $f$ such that $\CC f=W''$. Put $V''=\Ker f$. Then it is clear that $V=V'\oplus V''$ as a vector space.

To complete the proof it suffices to show that $V''$ is a submodule.
Since $W''$ has dimension $1$, it follows from Part~(C) of Theorem~\ref{hbirrnotr}  that
$H\cdot f =  n\pi \hbar^{-1}i f$ for some $n\in\Z$.
Since $\De(H)=H\otimes 1 + 1 \otimes H$ and $S(H)=-H$ \cite[Proposition~5.2]{AHHFG}, we have $(H\cdot f)(v)=H\cdot f(v)-f(H\cdot v)$ for every $v\in V$.
If $v\in V''$, then $f(v)=0$ and so $f(H\cdot v)=-(H\cdot f)(v)=-n\pi \hbar^{-1}i f(v)=0$. Hence $V''$ is invariant under~$H$. Arguing as at the end of the proof of \cite[Theorem VII.2.2]{Ka95}, we deduce that $V''$ is also invariant under $E$ and $F$.
\end{proof}

Denote by $\wt\Si_\hbar$ the set of equivalence
classes of continuous irreducible finite-dimensional representations of $\wt U(\fs\fl_2)_\hbar$ for given $\hbar$ (see Theorem~\ref{hbirrnotr}).
Then for $\si\in\wt\Si_\hbar$ we have a homomorphism
$\wt U(\fs\fl_2)_\hbar\to \M_{d_\si}(\CC)$, where $d_\si$ is the dimension of $\si$.
Denote by $\wt\io$ the corresponding homomorphism
$$
\wt U(\fs\fl_2)_\hbar\to \prod_{\si\in\wt\Si_\hbar}\M_{d_\si}(\CC).
$$

\begin{thm}\label{AMUhbar}\emph{(cf. Theorem~\ref{AMUq})}
Suppose that $e^{\hbar}$ is not a root of unity. Then
$\wt\io$ is a topological isomorphism.
\end{thm}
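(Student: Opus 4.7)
The plan is to prove the theorem by showing that $(\prod_{\si\in\wt\Si_\hbar}\M_{d_\si}(\CC),\wt\phi)$ is an Arens-Michael envelope of $\wt U(\fs\fl_2)_\hbar$. Since $\wt U(\fs\fl_2)_\hbar$ is itself an Arens-Michael algebra, the pair $(\wt U(\fs\fl_2)_\hbar,\id)$ is trivially also such an envelope, and then uniqueness of the envelope up to topological isomorphism will force $\wt\phi$ itself to be a topological isomorphism. This mirrors the strategy of the proof of Theorem~\ref{AMUq}.

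First I would verify the preliminaries: the target $\prod_\si\M_{d_\si}(\CC)$ with the direct product topology is Arens-Michael, since a product of Banach algebras is complete and its topology is generated by the maximum of submultiplicative norms over finite subsets of indices. Continuity of $\wt\phi$ follows because each component $T_\si$ (enumerated by Theorem~\ref{hbirrnotr}(C)) is a continuous homomorphism, and the universal property of the product topology then yields continuity of the combined map.

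The main step is to establish the universal property. Let $\pi\!:\wt U(\fs\fl_2)_\hbar\to B$ be a continuous homomorphism into a Banach algebra $B$. By Theorem~\ref{fdbcoin}(A) its range is finite dimensional, so we may replace $B$ by this range, a finite-dimensional Banach algebra. Viewing $B$ as a continuous finite-dimensional left $\wt U(\fs\fl_2)_\hbar$-module via $a\cdot b\!:=\pi(a)b$, Theorem~\ref{hbarcompred} gives complete reducibility, and hence $B$ decomposes as a direct sum of (copies of) irreducibles $V_{\si_1},\ldots,V_{\si_k}$ with $\si_i\in\wt\Si_\hbar$. Combining the coordinate projection $\prod_{\si\in\wt\Si_\hbar}\M_{d_\si}(\CC)\to\prod_{i=1}^k\M_{d_{\si_i}}(\CC)$ with the natural inclusion $B\hookrightarrow\prod_{i=1}^k\End(V_{\si_i})$ produces a continuous $\widehat\pi$ with $\widehat\pi\circ\wt\phi=\pi$. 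Uniqueness of $\widehat\pi$ reduces to density of the image of $\wt\phi$: for any finite $F\subseteq\wt\Si_\hbar$ the map $\wt U(\fs\fl_2)_\hbar\to\prod_{\si\in F}\M_{d_\si}(\CC)$ is surjective by the Jacobson density theorem applied to the pairwise non-equivalent irreducibles $\{T_\si:\si\in F\}$, and surjectivity onto every finite subproduct is exactly density in the direct product topology.

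The main obstacle is ensuring the clean decomposition of $B$ in the universal-property step; this relies crucially on having complete reducibility available (Theorem~\ref{hbarcompred}), which in turn rests on $e^\hbar$ not being a root of unity. Once this decomposition and the Jacobson density argument are in place, the rest of the argument is a routine diagram chase, and uniqueness of the Arens-Michael envelope closes the proof.
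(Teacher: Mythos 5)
Your proposal is correct and follows essentially the same route as the paper: use Theorem~\ref{fdbcoin} to get a finite-dimensional range, Theorem~\ref{hbarcompred} to decompose it into irreducibles from $\wt\Si_\hbar$, conclude that every continuous homomorphism into a Banach algebra factors through $\prod_{\si\in\wt\Si_\hbar}\M_{d_\si}(\CC)$, and then invoke the fact that $\wt U(\fs\fl_2)_\hbar$ is already an Arens-Michael algebra together with uniqueness of the Arens-Michael envelope. You merely spell out details the paper leaves implicit (continuity of $\wt\phi$, density of its image via Jacobson density), and your citation of Theorem~\ref{fdbcoin}(A) for finite-dimensionality is in fact the more apt reference.
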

\begin{proof}
Let $\phi$ be a continuous homomorphism  from  $\wt U(\fs\fl_2)_\hbar$ to a Banach algebra and let~$B$ be the range of~$\phi$. It follows from
Theorem~\ref{hbirrnotr} that $B$ is  a finite-dimensional Banach algebra.
Then $B$ becomes a continuous $\wt U(\fs\fl_2)_\hbar$-module with respect to the action given by $a\cdot b\!:=\phi(a)b$. Since $e^{\hbar}$ is not a root of unity, it follows from Theorem~\ref{hbarcompred} that this module is completely reducible. Therefore $\phi$ factors through some finite product of algebras of the form $\M_{d_\si}(\CC)$ and hence through $\prod_{\si\in\wt\Si_\hbar}\M_{d_\si}(\CC)$. Since $\wt U(\fs\fl_2)_\hbar$  is an Arens-Michael algebra, $\wt\io$ is a topological isomorphism.
\end{proof}

\begin{rem}
The Whitehead Lemma implies that the $\hbar$-adic formal deformation of $U(\fs\fl_2)$  is isomorphic  to $U(\fs\fl_2)[[\hbar]]$ as an algebra; see~\cite[\S\,4]{Dr89}. For the analytic form we have a more subtle  relation.
Comparing the lists of irreducible Banach space representations of
$\wt{U}(\fs\fl_2)_\hbar$  and  $U(\fs\fl_2)$ (cf. Remark~\ref{Uotc2}), we have that when
$e^{\hbar}$ is not a root of unity,  there is a (non-canonical) isomorphism
$$
\wt{U}(\fs\fl_2)_\hbar\to \left(\prod_{k\in\Z} B_k \right)\otimes \CC^2
$$
of Arens-Michael algebras, where each $B_k$ is isomorphic to $\wh U(\fs\fl_2)$.
Here each multiple corresponds to a zero of the hyperbolic sine. (See a similar effect for some $2$-generated algebras in~\cite{ArRC}.) On the other hand, if $\hbar'$ is a root of unity, then  Theorem~\ref{rouirrfdhb} implies that $\wt{U}(\fs\fl_2)_{\hbar'}$ is not isomorphic to $\wt{U}(\fs\fl_2)_\hbar$.
\end{rem}

\section{Concluding remarks and questions}
\label{sec:remqu}

\subsection*{Representations in the exceptional case}
We show in Theorem~\ref{fdbana} that all the irreducible Banach space representations of $U_q(\fg)$, where
$\fg$ be a  semisimple complex  Lie algebra and $|q|\ne 1$, are finite-dimensional. Moreover, it is well known that this is true (not only in the Banach space case) when $q$ is a root of unity. As a corollary, the irreducible Banach space representations can be classified in both cases. On the other hand, in the third case when $|q|=1$ and $q$ is not a root of unity, there are infinite-dimensional topologically irreducible representations; see Proposition~\ref{Tla1topirr}.
But the following two questions are open.

\begin{que}
Suppose that $|q|=1$ and $q$ is not a root of unity.
Are there any infinite-dimensional irreducible Banach space representations of $U_q(\fg)$? In particular, is it true that the representation $S_{\la,2}$ of $U_q(\fs\fl_2)$ considered in Proposition~\ref{Tla1topirr} is irreducible when~$q^2$ is not a root of~$\la^2$?
\end{que}

\begin{que}
Suppose that $|q|=1$ and $q$ is not a root of unity.
Is it possible to give a reasonable classification of topologically irreducible representations of $U_q(\fg)$ on Banach spaces or at least a classification of topologically simple Banach algebras that are completions of~$U_q(\fg)$?
\end{que}

\subsection*{Injectivity}
Since $U_q(\fg)$ has infinite-dimensional irreducible Banach space representations, some information is lost
with applying the Arens-Michael enveloping homomorphism $\io\!:U_q(\fg)\to \wh U_q(\fg)$. The same can be said for
the natural homomorphism $\te\!:U_q(\fg)\to \wt U(\fg)_\hbar$, where $q=e^\hbar$.  (For $\wt U(\fg)_\hbar$ see \cite[Remark~5.5]{AHHFG}.  When $\fg=\fs\fl_2$ the definition of $\te$ is given for in~\eqref{homthe}; in the general case
it is defined in a similar way.) But there is hope at least that the kernels of $\io$ and $\te$ are trivial.

\begin{que}\label{injh}
Are the homomorphisms  $\io\!:U_q(\fg)\to \wh U_q(\fg)$ and  $\te\!:U_q(\fg)\to \wt U(\fg)_\hbar$  always injective?
\end{que}

The first part is a partial case of Question~2 in \cite{AHHFG}.

\begin{rem}
In the classical case, the assertion that the Arens-Michael enveloping homomorphism $U(\fg)\to \wh U(\fg)$ is injective can  easily be deduced from the well-known fact that the adjoint representation of $U(\fg)$ is faithful and locally finite. When $q$ is not a root of unity, a similar argument cannot be applied to $U_q(\fg)$  because  the adjoint representation is not locally finite (see \cite[p.\,153]{Sm92}). On the other hand, by the result of Pedchenko \cite{Pe15} mentioned in the introduction, the map
$\cO(\CC\times\CC^\times\times \CC)\to  \wh U_q(\fs\fl_2)$ is a topological isomorphism when $|q|=1$. Therefore $\io\!:U_q(\fs\fl_2)\to \wh U_q(\fs\fl_2)$  is injective under this assumption.
\end{rem}

\subsection*{Complete reducibility}
The main step in the proof of the structural result for $\wt U(\fs\fl_2)_\hbar$ in the case when $e^{\hbar}$ is not a root of unity is the complete reducibility of every continuous finite-dimensional representation (Theorem~\ref{hbarcompred}).
To provide a structural result for $\wt U(\fg)_\hbar$ for arbitrary~$\fg$ we need a similar assertion on complete reducibility.

\begin{que}
Suppose that $e^{\hbar}$ is not a root of unity. Is every continuous finite-dimensional representation of $\wt U(\fg)_\hbar$ completely reducible?
\end{que}

\end{document}